\numberwithin{equation}{section}
\def\qed{ \hfill $\Box$}
\renewcommand{\section}{
  \@startsection
  {section}
  {1}
  {0pt}
  {1.1\baselineskip}
  {0.2\baselineskip}
  {\sc \centering}
}
\newcommand{\QCLT}{QIP }
\newcommand{\QIP}{QIP}
\newcommand{\Pomega}{\mathbb{P}^{\omega}}
\newcommand{\Eomega}{\mathbb{E}^{\omega}}
\newenvironment{sketchpfoflem}[1]
{\par\vskip2\parsep\noindent{\sc Sketch of Proof of Lemma\ #1.}}{{\hfill
$\Box$}
\par\vskip2\parsep}
\newenvironment{pfofthm}[1]
{\par\vskip2\parsep\noindent{\sc Proof of Theorem\ #1.}}{{\hfill
$\Box$}
\par\vskip2\parsep}
\newcommand{\Z}{\mathbb Z}
\newcommand{\Omegahat}{\hat \Omega}
\newcommand{\that}{\hat T}
\newcommand{\hatP}{\hat{\mathbb{P}}}
\newcommand{\T}{T}
\definecolor{db}{rgb}{0.1,0,0.75}
\definecolor{lm}{cmyk}{0 ,1,0,0}
\newcommand{\pr}{\mathbb P}
\newcommand{\N}{\mathbb N}
\newtheorem{lem}{Lemma}
\newtheorem{thm}{Theorem}
\newtheorem{cor}{Corollary}
\DeclareMathOperator{\sgn}{sgn}
\begin{document}
\def\shorttitle{\textbf{Random Mass Splitting}}
\title{\textbf{\Large \sc Random mass splitting and a quenched invariance principle}}

\author[1]{Sayan Banerjee\thanks{sayan.banerjee20@gmail.com}}
\author[2]{Christopher Hoffman\thanks{hoffman@math.washington.edu}}
 \affil[1]{University of Warwick}
 \affil[2]{University of Washington}
\date{\today}
\maketitle
\begin{abstract}
We will investigate a random mass splitting model and the closely related random walk in a random environment (RWRE). The heat kernel for the RWRE at time $t$ is the mass splitting distribution at $t$. We prove a quenched invariance principle (\QIP) for the RWRE which gives us
a quenched central limit theorem for the mass splitting model. Our RWRE has an environment which is changing with time. We follow the outline for proving a \QIP{} for a random walk in a space-time random environment laid out by Rassoul-Agha and Sepp{\"a}l{\"a}inen \cite{timo} which in turn was based on the work of Kipnis and Varadhan  \cite{kipnis} and others.
\end{abstract}

\section{Introduction}
Imagine a one dimensional city (motivated by Abbott's \textit{Flatland}) on the $Y$-axis with houses at the points $(0,k)$. Suppose the city is stricken with an epidemic and things are getting worse by the day. As the death toll rises, each house $(0,k)$ has only a Poisson$(1)$ number of survivors $v(0,k)$ when the long awaited medical breakthrough suddenly happens. The surviving residents (assume at least one survives) of house $(0,0)$ are scientists who were working on a cure for a while and they finally have an antidote! But the quantity that is produced is limited (say has mass $1$ unit). So, they get out of their house carrying an equal proportion of the medicine $\displaystyle{\left(1/v(0,0)\right)}$, and start performing one-dimensional simple random walks, with time represented along the positive $X$-axis and the spatial coordinate along the $Y$-axis, to share it with the survivors. Meanwhile, the remaining survivors decide that staying inside the house in unhygienic conditions is dangerous, and they get out of their respective houses and start performing (one-dimensional) simple random walks at the same time as the scientists. Whenever a group of people meet on the way, say at $(t,k)$, and at least one person in the group has some medicine, it gets equally divided among all the people in the group. This process continues. The question we ask in this article is: How does the mass distribution of medicine at time $t$ behave for large values of $t$?

We call this system the \textit{Random Mass Splitting} model. This model came about as the natural discrete analogue of the following problem which was suggested to us by Krzysztof Burdzy \cite{burdzy}: suppose there is an initial configuration of particles distributed according to a Poisson point process. Each particle performs a Brownian motion when away from its two neighboring particles, and when two particles meet, they get reflected off each other so that the \textit{ordering is preserved}. This is the two-sided version of the \textit{Atlas model} (see \cite{pal}) which frequently appears in Stochastic Portfolio theory. We give a mass of one to a tagged particle. Every time two particles collide, the mass splits in half between them (it can be shown that there are no triple collisions by methods of \cite{pal}). A way of understanding the intersection graph of these particles is to monitor the evolution of this mass in time. We hope that the developments of this paper will shed light on this problem.

Our model fits in a well-studied class of models where two types of particles interact. These problems become substantially harder when \textit{both types of particles move}. 
In \cite{kesten2005} \cite{kesten2006} and \cite{kesten2008} Kesten and Sidoravicius investigated the shape of the infected set when a group of moving particles spread a contagious disease (or rumour) among another group of healthy (ignorant) particles which also move. The movement of each particle is assumed to be independent of the other. Another example from this class is the work of Peres et.\ al.\ \cite{peres} who studied the  detection of a moving particle by a mobile wireless network. Our model is more in the spirit of the former collection of papers. However instead of looking at a shape theorem for the sites where the antidote has reached, we are interested in \textit{quantifying the spread} by looking at the distribution of mass that is initially carried by a few particles (the ones at the origin). 

Our study of the model begins with the following observation. We use the movement of the villagers over time to define a random environment. Then we study the movement of a random walker in this random environment (RWRE).  We study this RWRE because it turns out that the heat kernel of the walk conditioned on the environment $\omega$ at time $t$ is precisely the distribution of the medicine at time $t$ when the villagers are moving according to $\omega$. Random walk in random environment models have been studied by many authors. This can be a very difficult field as even the simplest properties such as transience and recurrence are difficult to establish \cite{kalikow}. But a theory of random walk in random environments has been developed. We will harness that theory to find the asymptotic mass distribution.

In many examples, such as simple random walk on supercritical percolation clusters, we get a quenched invariance principle (i.e.\ an almost sure convergence of the RWRE to Brownian motion) \cite{begerbiskup} \cite{ss}. However in other examples we find behaviors that are very different from the usual diffusive behavior of simple random walk on $\Z^d$
\cite{sinai} \cite{bbhk}.
%
%
Many of the proofs of invariance principles for random walks in random environments 
have their origins in the seminal work of Kipnis and Varadhan \cite{kipnis}. This paper laid down the foundation for quenched invariance principles for reversible Markov chains. Maxwell and Woodroofe \cite{maxwellwoodroofe} and Derriennic and Lin \cite{lin} subsequently extended their approach to the non-reversible set-up. Rassoul-Agha and Sepp{\"a}l{\"a}inen \cite{timo} developed further on these techniques to give a set of conditions under which a quenched invariance principle (\QIP) holds for random walks in space-time random environments. They verified these conditions to prove a \QCLT for \textit{i.i.d. environments} (see \cite{timo}). 
A different method for proving a quenched CLT was developed by Bolthausen and Sznitman  \cite{sznit} and refined by Berger and Zeitouni \cite{bergerquenched}.

In this article, we broadly follow the approach of \cite{timo}. Verifying their conditions for a \QCLT presents significantly more complications in our situation than it does with their i.i.d.\ environments. Especially, the moment condition (see Subsection \ref{subsection:momentcondition}) presents quite a hurdle. Also, the absence of \textit{ellipticity} in the model (to be explained later) requires us to use some abstract concepts from ergodic theory, like `Kakutani skyscrapers' (see Subsection \ref{subsection:ergodicity}).
That these complications can be overcome demonstrates the robustness of the approach in \cite{timo}.

\section{Description of the model}\label{section:description}
We now present a more rigorous mathematical description of our model. Consider i.i.d.\ Poisson$(1)$ random variables $\{v(0,k): k \in \mathbb{Z}\}$ representing particles at points $(0,k)$ of the $Y$-axis at time $t=0$. Each particle starts performing a one-dimensional simple random walk $\{S^{(k,i)}(\cdot): 1 \le i \le v(0,k), k \in \mathbb{Z}\}$, with $S^{(k,i)}(0)=k$ and time represented along the positive $X$-axis. This gives a collection of random variables $\{e^+(t,y),e^-(t,y), v(t,y)\}$ where
\begin{eqnarray*}
e^+(t,y)= \#\{(k,i): S^{(k,i)}(t)=y, S^{(k,i)}(t+1)=y+1\},\\\\
e^-(t,y)= \#\{(k,i): S^{(k,i)}(t)=y, S^{(k,i)}(t+1)=y-1\},\\\\
v(t,y)= \#\{(k,i): S^{(k,i)}(t)=y\}= e^+(t,y)+ e^-(t,y).
\end{eqnarray*}
Note that the following also holds: $$v(t,y)=e^+(t-1,y-1)+e^-(t-1,y+1).$$ 
For technical convenience, we take the $S^{(k,i)}$ to be two-sided random walks (i.e. defined for both positive and negative times) and thus the above random variables are defined for all $(t,y) \in \mathbb{Z}^2$.\\\\

Now we mathematically describe the mass distribution of the medicine. For $t \ge 0$, $p(t,y)$ denote the (random amount of) mass at $(t,y)$. 
Then, for each $t$, $$\displaystyle{\sum_{y=-t}^{t}p(t,y)=1}.$$ Thus, $p(0,0)=1$. At time $1$, assuming $v(0,0)>0$, we have $$\displaystyle{p(1,1)=\frac{e^+(0,0)}{v(0,0)}}\ \ \ \ \text{and} \ \ \ \ \displaystyle{p(1,-1)=\frac{e^-(0,0)}{v(0,0)}}.$$ Now, suppose $e^+(0,0)>0$. After the mass gets split equally among the particles at $(1,1)$, each one has mass $\displaystyle{\frac{e^+(0,0)}{v(0,0)v(1,1)}}$. A similar statement holds if $e^-(0,0)>0$. Thus at time $2$, the mass distribution is 
\begin{eqnarray*}
p(2,2)&=&\frac{e^+(0,0)e^+(1,1)}{v(0,0)v(1,1)},\\\\
p(2,0)&=&\frac{e^+(0,0)e^-(1,1)}{v(0,0)v(1,1)}+\frac{e^-(0,0)e^+(1,-1)}{v(0,0)v(1,-1)} \ \ \mbox{ and }\\\\
p(2,-2)&=&\frac{e^-(0,0)e^-(1,-1)}{v(0,0)v(1,-1)}.
\end{eqnarray*}
Here we make a convention of taking any term in the above expression whose denominator (and hence numerator) vanishes to be equal to zero.

To write down the explicit distribution of mass at time $t$, we define the \textit{environment} to be the collection of `random edge crossings'
\begin{equation*}
\omega=\left\{\left(e^+(t,y),e^-(t,y)\right): (t,y) \in \mathbb{Z}^2\right\}
 \in ((\mathbb{Z}_+\cup\{0\})^2)^{\mathbb{Z}^2}.
\end{equation*}
We call the space of environments $\Omega =((\mathbb{Z}_+\cup\{0\})^2)^{\mathbb{Z}^2}=\{(a(t,y),b(t,y))\in (\mathbb{Z}_+\cup\{0\})^2: (t,y) \in \mathbb{Z}^2\}$. 
With these definitions we can inductively define the mass distribution with the equation
\begin{equation} \label{simplifies}
p(t+1,y;\omega)=p(t,y+1;\omega)\frac{e^-(t,y+1)}{v(t,y+1)}
     +  p(t,y-1;\omega)\frac{e^+(t,y-1)}{v(t,y-1)}.
\end{equation}

The space $\Omega$ is equipped with the canonical product sigma field $\mathcal{G}$ and the natural shift transformation $T_{(t',y')}$ such that
\begin{eqnarray*}
\left(a(t,y),b(t,y)\right)\left(T_{(t',y')}\omega\right)=\left(a(t+t',y+y'),b(t+t',y+y')\right)(\omega).
\end{eqnarray*}
The random edge crossings induce a measure $\mathbb{P}$ on $(\Omega,\mathcal{G})$ and it is easy to see that $\mathbb{P}$ is invariant under the shift $T_{(t',y')}$. It can be further proved that $\left(\Omega,\mathcal{G},\left\lbrace T_{(t',y')}\right\rbrace_{(t',y') \in \mathbb{Z}^2},\mathbb{P}\right)$ is ergodic. We leave this verification to the reader. We shall denote the corresponding expectation by $\mathbb{E}$.\\\\

Now fix an $\omega$. 
This generates a random mass distribution at $(t,y)$ which we write as $p(t,y;\omega)$. 
Let $\Gamma_t(0,y)$ denote the set of all nearest neighbor paths $\gamma: \{0,1,\dots,t\} \rightarrow \mathbb{Z}$ with $\gamma(0)=0$ and $\gamma(t)=y$. Then the mass at $(t,y)$ is given by
\begin{eqnarray*}
p(t,y;\omega)=\sum_{\gamma \in \Gamma_t(0,y)}\prod_{i=0}^{t-1}\frac{e^{\sgn(\gamma(i+1)-\gamma(i))}(i,\gamma(i))}{v(i,\gamma(i))}\mathbbm{1}(v(i,\gamma(i))>0),
\end{eqnarray*}
where a product in the above sum is interpreted as zero if one of the terms in the product vanishes. 

Note that the mass remains $1$ at $(0,0)$ at all times when $v(0,0)=0$. To avoid this, we make a change of measure to $\tilde{\mathbb{P}}$ where $$\tilde{\mathbb{P}}(A)=\mathbb{E}(v(0,0)\mathbbm{1}_A)$$ for $A \in \mathcal{G}$. Note that $\tilde{\mathbb{P}}$ is a probability measure as $\mathbb{E}(v(0,0))=1$. This change of measure ensures $\tilde{\mathbb{P}}(v(0,0)>0)=1$. So from now on we only consider $\omega$ with $v(0,0)>0$. 

Given any $\omega \in \Omega$ and any $(t,y) \in \Z^2$, we will now describe a random walk in the random environment $\omega$ started at space point $y$ and time point $t$. The law of this random walk will be given in accordance with the way the mass at $(t,y)$ propagates in the future. Once this link is made precise (see Lemma \ref{equivalence}), the results we prove about this random walk will automatically translate to results about the random mass distribution $p$. 

We write $X_n(t,y;\omega)$ (or simply $X_n$ when the environment $\omega \in \Omega$ and the starting point $(t,y) \in \Z^2$ is clear from the context) for the random variable that represents the location of the random walk after $n$ steps (at time $t+n$). The distribution of $(X_n(t,y;\omega))_{n \ge 0}$, which we denote by $\Pomega_{(t,y)}$, is defined as follows.
\begin{eqnarray*}
\Pomega_{(t,y)}(X_0=(t,y))=1,
\end{eqnarray*}
and $(X_n)_{n \ge 0}$ is a time inhomogeneous Markov chain with transition probabilities
\begin{eqnarray} \label{ravegreen}
\Pomega_{(t,y)}(X_{n+1}=(t'+1,y'+1)|X_n=(t',y'))&=&\frac{e^+(t',y')}{v(t',y')} \ \ \ \  \text{and} \\
\Pomega_{(t,y)}(X_{n+1}=(t'+1,y'-1)|X_n=(t',y'))&=&\frac{e^-(t',y')}{v(t',y')}. \label{ravegreen2}
\end{eqnarray}

Now we present the key observation that connects the random walk in a random environment with the random mass splitting model.

\begin{lem} \label{equivalence}
For $t \in \N$ and $y \in \Z$ 
$$p(t,y;\omega)=\pr_{(0,0)}^{\omega}(X_t=(t,y)).$$
\end{lem}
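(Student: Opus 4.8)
The plan is to show that the function $q(t,y;\omega):=\pr_{(0,0)}^{\omega}(X_t=(t,y))$ satisfies exactly the same recursion \eqref{simplifies} as $p(t,y;\omega)$, with the same data at time $0$, and then conclude by induction on $t\ge 0$. The base case is immediate: $q(0,y;\omega)=\Pomega_{(0,0)}(X_0=(0,y))=\mathbbm{1}(y=0)=p(0,y;\omega)$.

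For the inductive step I would fix $t\ge 0$ and condition on the position of the walk at time $t$. Since the first (time) coordinate of $X_n$ increases deterministically by one at each step, $X_t$ lies on the line $\{(t,z):z\in\Z\}$, and by \eqref{ravegreen}--\eqref{ravegreen2} the only states from which the chain can reach $(t+1,y)$ in one step are $(t,y+1)$, via a $-$ step, and $(t,y-1)$, via a $+$ step. Hence the (time-inhomogeneous) Markov property gives
\begin{equation*}
q(t+1,y;\omega)=q(t,y+1;\omega)\,\frac{e^-(t,y+1)}{v(t,y+1)}+q(t,y-1;\omega)\,\frac{e^+(t,y-1)}{v(t,y-1)},
\end{equation*}
which is precisely \eqref{simplifies}. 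Combined with the induction hypothesis $q(t,\cdot;\omega)=p(t,\cdot;\omega)$ this yields $q(t+1,\cdot;\omega)=p(t+1,\cdot;\omega)$, completing the induction.

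The one point that needs care is the zero-denominator convention, i.e.\ sites $(s,z)$ with $v(s,z)=0$, at which the transition probabilities \eqref{ravegreen}--\eqref{ravegreen2} are not literally defined. Using the identity $v(s,z)=e^+(s-1,z-1)+e^-(s-1,z+1)$ one checks inductively that, started from $(0,0)$ (recall we only consider $\omega$ with $v(0,0)>0$), the walk almost surely visits only sites with $v>0$: from a site $(s,z)$ with $v(s,z)>0$ a $+$ step has positive probability only when $e^+(s,z)>0$, in which case $v(s+1,z+1)\ge e^+(s,z)>0$, and symmetrically for a $-$ step. Consequently $q(s,z;\omega)=0$ whenever $v(s,z)=0$, which matches the corresponding vanishing property of $p$ (immediate from the same identity, \eqref{simplifies}, and the stated convention), so the two recursions agree term by term with no ambiguity. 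Apart from this bookkeeping the argument is a routine induction, and I do not anticipate a substantive obstacle — the only mild subtlety is making the degenerate-site conventions for $p$ and for $X$ line up.
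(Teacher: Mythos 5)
Your proposal is correct and takes essentially the same approach as the paper's own proof: a straightforward induction on $t$, comparing the Markov-chain recursion for $\pr^\omega_{(0,0)}(X_t=(t,\cdot))$ with the mass-splitting recursion \eqref{simplifies}. The extra paragraph you include about degenerate sites (where $v=0$) is a reasonable bit of bookkeeping that the paper leaves implicit, but it does not change the structure of the argument.
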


\begin{proof}
This follows by induction on $t$. Fix an environment $\omega$.  Then we have
$$p(0,0;\omega)=\pr_{(0,0)}^{\omega}(X_0=(0,0))=1$$
and
$$p(0,y;\omega)=\pr_{(0,0)}^{\omega}(X_0=(0,y))=0$$ for all $y \neq 0$.
Now suppose the lemma is true for some $t$ and all $y \in \Z$. The transition probabilities for $X_{t+1}$ given in (\ref{ravegreen}) and (\ref{ravegreen2}) and the mass splitting rule in (\ref{simplifies}) show that for an arbitrary $y$
\begin{eqnarray*}
\pr_{(0,0)}^{\omega}(X_{t+1}=(t+1,y))
&=&\pr_{(0,0)}^{\omega}(X_{t}=(t,y+1))\frac{e^-(t,y+1)}{v(t,y+1)}\\
&&     +  \pr_{(0,0)}^{\omega}(X_{t}=(t,y-1))\frac{e^+(t,y-1)}{v(t,y-1)}\\
&=&p(t,y+1;\omega)\frac{e^-(t,y+1)}{v(t,y+1)}
     +  p(t,y-1;\omega)\frac{e^+(t,y-1)}{v(t,y-1)}\\
&=&p(t+1,y;\omega).
\end{eqnarray*}
Thus the induction hypothesis holds for $t+1$ and all $y \in \Z$. This completes the proof.
\end{proof}

This gives us a Markov chain $\hat{X}=(X_n)_{n \ge 0}$ for a fixed environment $\omega$. $\mathbb{P}^{\omega}$ is called the \textit{quenched law}. The \textit{joint annealed law} is given by
\begin{eqnarray*}
\tilde{\mathbb{P}}_{(t,y)}(d\hat{X},d\omega)=\Pomega_{(t,y)}(d\hat{X})\tilde{\mathbb{P}}(d\omega).
\end{eqnarray*}
The law $\tilde{\mathbb{P}}_{(t,y)}(d\hat{X},\Omega)$ is called the \textit{marginal annealed law} or just \textit{annealed law}. Define $\mathbf{S}^{(n)}(t)$ by
\begin{equation}
\frac{X_{[nt]}-([nt],0)}{\sqrt{n}}=\left(0,\mathbf{S}^{(n)}(t)\right).
\end{equation}
Since the major breakthrough in \cite{timo}, there has been a lot of work in RWRE. One of the major technical hurdles one has to overcome to prove QIP is to control the difference of two random walks driven by the same environment $\omega$. In \cite{timo}, the annealed law of this process is simply a random walk perturbed at the origin. In \cite{timoballistic} this process becomes significantly more complicated due to the backtracking nature of such walks although the environment is $i.i.d$. Later, \cite{mathew} applied the techniques of \cite{timoballistic} in an environment with spatial correlations present. Besides these models, a large variety of RWRE in dynamic correlated environments have been studied in recent years, see for example \cite{boldrighini2} \cite{boldrighini1} \cite{dolgopyat1} \cite{dolgopyat2} \cite{mathew} \cite{redig}. The methods used vary from Fourier series in \cite{boldrighini2} \cite{boldrighini1}, to martingale approximation in \cite{dolgopyat1} \cite{dolgopyat2}, to regeneration times in \cite{redig}. Also note that the a version of RWRE where the environment is defined by random walks  is considered in \cite{impa}.
 However to the best of our knowledge, a clear general idea about how to solve such problems seems to be missing as of now.\\\\
The major difficulties that arise in our model are the following:
\begin{enumerate}
\item[(i)] Our model is \textit{not symmetric}, even for fixed $\omega$, as the first co-ordinate (time) always takes a deterministic step of 1 with each transition of the Markov chain $\hat{X}$. This makes it different from the random conductance model, see \cite{biskup}. This environment thus falls in the category of \textit{random walk in random space-time environments} due to the deterministic increase in the time at each step.  
\item[(ii)] The environment under consideration has \textit{high correlations} between different points in space-time. The correlated environment models (such as \cite{boldrighini1} and \cite{mathew}) considered so far are rather specialized. As far as we know, the assumptions made in those works do not apply to our model.
\item[(iii)] Our model is \textit{not elliptic} in the sense of condition $(A5)$ of \cite{dolgopyat1} as there are points in space-time where the walker's next move is entirely determined by the environment. These points $(t,y)$ occur where all of the walkers in 
$\omega$ moved in the same direction ($e^+(t,y)>e^-(t,y)=0$ or vice versa). A sub-case of this that creates more trouble is the possibility of having a long space-time path $$\tau = \{(t,\tau(t)): t \in [T_1,T_2], v(t,\tau(t))=1 \ \text{and} \ |\tau(t+1)-\tau(t)|=1 \ \forall \ T_1 \le t \le T_2-1\}$$ in $\omega$. This is because, if the two copies of the random walk driven by $\omega$ get stuck in $\tau$, they are forced to move together for a long time resulting in some kind of `stickiness'. In Lemma \ref{lem:poisson}, we show that with high probability such paths cannot be stuck too long.

It is worth mentioning that in the $i.i.d$ environment setting of \cite{timo} a weaker form of ellipticity (see hypothesis (ME) there) is sufficient for their general technique of proving QIP to work. But with the high correlation in our environment, it is not clear how to directly apply these techniques. 
\end{enumerate}

These differences from the models in the literature present some technical challenges, although we adapt the same rough outline as in \cite{timo} to prove a quenched invariance principle.
Now we present the main theorem of this article:
\begin{thm}\label{thm:QCLT}
For almost all $\omega$ with respect to $\tilde{\mathbb{P}}$, the process $\mathbf{S}^{(n)}$ converges weakly to a standard Brownian motion $B$ in the usual Skorohod topology under $\Pomega_{(0,0)}$.
\end{thm}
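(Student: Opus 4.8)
We follow the Kipnis--Varadhan / Maxwell--Woodroofe martingale-approximation scheme in the form laid out by Rassoul-Agha and Sepp{\"a}l{\"a}inen \cite{timo}. Throughout, write $X_n=(n,Y_n)$ for the walk started at $(0,0)$, so that $\mathbf{S}^{(n)}(t)=Y_{[nt]}/\sqrt n$, and $\xi_k=Y_{k+1}-Y_k\in\{-1,1\}$. The first task is to introduce the \emph{environment seen from the walker}: the $\Omega$-valued Markov chain $\bar\omega_n=T_{X_n}\omega$, whose transition operator $\Pi$ sends $\bar\omega$ to $T_{(1,1)}\bar\omega$ with probability $e^+(0,0)/v(0,0)$ and to $T_{(1,-1)}\bar\omega$ with probability $e^-(0,0)/v(0,0)$. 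A one-line computation using the shift-invariance of $\mathbb{P}$ and the identity $v(t,y)=e^+(t-1,y-1)+e^-(t-1,y+1)$ shows that $\tilde{\mathbb{P}}$ is $\Pi$-invariant; this is exactly why the tilt by $v(0,0)$ is the right normalization. The more delicate structural input is \emph{ergodicity} of this chain under $\tilde{\mathbb{P}}$. Since the model is not elliptic (many sites carry transition probabilities in $\{0,1\}$), the usual ``every cylinder is reachable'' argument fails, so I would instead pass to the transformation induced on a positive-$\tilde{\mathbb{P}}$-measure set of environments that behave well near the origin, deduce ergodicity of that (now effectively elliptic) induced chain from the $\mathbb{Z}^2$-ergodicity of $(\Omega,\mathcal G,\mathbb{P})$, and transfer the conclusion back via the associated Kakutani skyscraper.

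With the ergodic environment chain in hand, decompose $Y_n$. The quenched drift is $d(\omega)=(e^+(0,0)-e^-(0,0))/v(0,0)$; since $\mathbb{E}^\omega[\xi_k\mid X_0,\dots,X_k]=d(\bar\omega_k)$ and $\mathbb{E}_{\tilde{\mathbb{P}}}[d]=\mathbb{E}[e^+(0,0)-e^-(0,0)]=0$, the process $N_n=\sum_{k<n}\bigl(\xi_k-d(\bar\omega_k)\bigr)$ is a quenched $L^2$-martingale with bounded increments, and it remains to control the additive functional $\sum_{k<n}d(\bar\omega_k)$ of the stationary ergodic chain $(\bar\omega_k)$. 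By the Maxwell--Woodroofe criterion this functional is a martingale plus a remainder that is $o(\sqrt n)$ in $L^2$, \emph{provided the moment condition}
\[
\sum_{n\ge1} n^{-3/2}\,\bigl\|\,\mathbb{E}^\omega_{(0,0)}[Y_n]\,\bigr\|_{L^2(\tilde{\mathbb{P}})}<\infty
\]
holds (recall $\mathbb{E}^\omega_{(0,0)}[Y_n]=\sum_{k<n}\Pi^k d$). Granting it, one obtains $Y_n=M_n+R_n$ with $M_n=Y_n+\chi(\bar\omega_n)-\chi(\bar\omega_0)$ a quenched martingale whose increments are stationary and ergodic under the annealed law ($\chi$ the corrector produced by the construction), and $\|R_n\|_{L^2}=o(\sqrt n)$.

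Verifying the moment condition is the heart of the matter and the main obstacle. Let $\mathbb{Q}$ be the law of a pair of walkers $X,X'$ started at $(0,0)$ and evolving independently in a common environment $\omega\sim\tilde{\mathbb{P}}$; then $\bigl\|\mathbb{E}^\omega_{(0,0)}[Y_n]\bigr\|_{L^2(\tilde{\mathbb{P}})}^2=\mathbb{E}_{\mathbb{Q}}[Y_nY'_n]$. Conditioning on all edge counts at times $<k$ together with both walkers' steps before time $k$ --- under which the splits at time $k$ are fresh symmetric binomials --- forces $\mathbb{E}_{\mathbb{Q}}[\xi_j\xi'_k]=0$ for $j\neq k$, while for $j=k$ it gives $\mathbb{E}_{\mathbb{Q}}[\xi_k\xi'_k]\le\mathbb{Q}(X_k=X'_k)$; hence
\[
\bigl\|\,\mathbb{E}^\omega_{(0,0)}[Y_n]\,\bigr\|_{L^2(\tilde{\mathbb{P}})}^2\ \le\ \sum_{k<n}\mathbb{Q}\bigl(X_k=X'_k\bigr).
\]
So everything reduces to showing that two walkers driven by a common environment meet rarely --- any polynomial (indeed a sufficiently fast logarithmic) decay of $\mathbb{Q}(X_k=X'_k)$ makes the series above converge, and the expected order is $k^{-1/2}$. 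The difference $D_k=X_k-X'_k$ is a $\mathbb{Q}$-martingale whose quadratic variation increases by exactly $2$ at each step at which the walkers are apart, which makes it diffusive and would give $\mathbb{Q}(D_k=0)\asymp k^{-1/2}$ via a local-limit-type estimate; the genuine difficulty is the behavior near the diagonal, where the two walkers are \emph{forced to travel together} for a long time whenever the environment contains a long path of sites carrying a single villager ($v\equiv1$) --- the ``stickiness'' phenomenon. This is precisely where Lemma \ref{lem:poisson} enters: such paths are short with overwhelming probability, so the expected time the two walkers spend together stays controlled and the diffusive estimate survives. Producing a bound robust enough to withstand the tilt to $\tilde{\mathbb{P}}$ is the technically hardest step of the whole argument.

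Finally, assemble the pieces. Along the ergodic chain $(\bar\omega_k)$ the ergodic theorem gives $\langle M\rangle_n/n\to\sigma^2$ for $\tilde{\mathbb{P}}$-a.e.\ $\omega$, and the quenched functional martingale CLT (as in \cite{timo}) yields $M_{[n\cdot]}/\sqrt n\Rightarrow\sigma B$ under $\Pomega_{(0,0)}$ for a.e.\ $\omega$. The same conditioning argument that killed the off-diagonal correlations shows that the increments $\xi_k$ are uncorrelated with unit variance under the annealed law, so $Y_n$ has annealed second moment $n$; combined with $\chi\in L^2(\tilde{\mathbb{P}})$ this forces $\sigma^2=1$. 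To discard $R_n$ one shows $\mathbb{E}_{\tilde{\mathbb{P}}}\mathbb{E}^\omega\bigl[\max_{k\le n}R_k^2\bigr]=o(n)$ and upgrades this to $\max_{k\le n}|R_k|/\sqrt n\to0$ in $\Pomega_{(0,0)}$-probability for $\tilde{\mathbb{P}}$-a.e.\ $\omega$ via a dyadic subsequence, Borel--Cantelli, and a maximal inequality to fill the gaps. Slutsky's theorem then delivers $\mathbf{S}^{(n)}\Rightarrow B$ under $\Pomega_{(0,0)}$ for $\tilde{\mathbb{P}}$-a.e.\ $\omega$, which is Theorem \ref{thm:QCLT}; via Lemma \ref{equivalence} this is simultaneously the quenched CLT for the random mass-splitting distribution.
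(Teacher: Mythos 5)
Your proposal is essentially the paper's proof: the same environment-seen-from-the-walker chain with $\Pi$-invariance of $\tilde{\mathbb{P}}$, the same Kakutani-skyscraper route to ergodicity over $\{v(0,0)>0\}$, the same reduction of the moment condition to counting meetings of two independent walkers in a common environment, and the same identification of Lemma \ref{lem:poisson} as the key input controlling stickiness at the diagonal. The one presentational difference is that you invoke the Maxwell--Woodroofe summability criterion directly and gesture at a pointwise local-limit bound $\mathbb{Q}(D_k=0)\asymp k^{-1/2}$, whereas the paper applies Theorem~2 of \cite{timo} (which repackages Maxwell--Woodroofe for the quenched space-time setting) and proves only the cumulative bound $\sum_{k<n}\tilde{\mathbb{P}}^*(Y_k=0)=O(\sqrt n\,\log^3 n)$ via an excursion/holding-time decomposition (Lemmas \ref{lem:stopping}--\ref{lem:poisson}, Corollary \ref{cor:holding}) rather than any pointwise estimate.
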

From this we deduce the answer to the question that was posed at the beginning of this paper.

\begin{cor}
For a.e. $\omega$ with respect to $\tilde{\mathbb{P}}$, the mass distribution $\mu_t$ on $\mathbb{R}$ given by 
\begin{equation*}
\mu_t(y)=
\begin{cases}
p(t,\sqrt{t}y, \omega) & \mbox{if } y \in t^{-1/2}\mathbb{Z},\\
0 & \mbox{otherwise},
\end{cases}
\end{equation*}
converges weakly to the standard normal distribution as $t \rightarrow \infty$.
\end{cor}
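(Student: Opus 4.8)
The plan is to read the corollary off Theorem \ref{thm:QCLT} by extracting the one–dimensional marginal of the quenched invariance principle at time $1$. First I would invoke Lemma \ref{equivalence}: for every integer $t\ge 1$ and $y\in\Z$ we have $p(t,y;\omega)=\Pomega_{(0,0)}(X_t=(t,y))$. Since the walk started at $(0,0)$ has time coordinate exactly $t$ after $t$ steps, writing $Y_t$ for its spatial coordinate gives $\Pomega_{(0,0)}(Y_t=y)=p(t,y;\omega)$, and from the definition of $\mathbf S^{(n)}$ — taken with $n=t$ and evaluated at time $1$, so that $[nt]=t$ — we get $\mathbf S^{(t)}(1)=Y_t/\sqrt t$. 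Hence $\mu_t$ is exactly the distribution of $\mathbf S^{(t)}(1)$ under $\Pomega_{(0,0)}$: it places mass $p(t,\sqrt t\,u;\omega)=\Pomega_{(0,0)}\!\left(Y_t/\sqrt t=u\right)$ at each point $u\in t^{-1/2}\Z$, and this is consistent with the parity restriction since $p(t,z;\omega)=0$ whenever $z\not\equiv t \pmod 2$.

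Next I would fix $\omega$ in the full-$\tilde{\mathbb P}$-measure set on which Theorem \ref{thm:QCLT} holds, so that $\mathbf S^{(n)}\Rightarrow B$ in the Skorohod topology under $\Pomega_{(0,0)}$. The coordinate evaluation $f\mapsto f(1)$ on Skorohod space is continuous at every path that is continuous at the point $1$, and Brownian motion is almost surely continuous there; the continuous mapping theorem then gives $\mathbf S^{(n)}(1)\Rightarrow B(1)$, and $B(1)$ is standard normal. Combining this with the identification of the previous paragraph yields $\mu_t\Rightarrow N(0,1)$ as $t\to\infty$ through the integers — which is the only range on which $p(t,\cdot\,;\omega)$, and hence $\mu_t$, is defined — for $\tilde{\mathbb P}$-a.e.\ $\omega$.

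There is essentially no substantive obstacle: this really is a corollary, and even a single-time quenched CLT (which is the time-$1$ marginal of Theorem \ref{thm:QCLT}) would suffice. The only points needing care are bookkeeping — that $\mu_t$, a probability measure supported on the lattice $t^{-1/2}\Z$, converges weakly to the continuous standard normal law, which is immediate because weak convergence of these measures is literally weak convergence in distribution of the real random variables $\mathbf S^{(t)}(1)$ — and that the parity and support conventions on $p$ in Section \ref{section:description} match those built into the definition of $\mathbf S^{(n)}$.
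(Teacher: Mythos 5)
Your proposal is correct and matches the paper's own argument: the paper also reduces the corollary to the identification $p(t,y;\omega)=\Pomega_{(0,0)}(X_t=(t,y))$ from Lemma \ref{equivalence}, recognizes $\mu_t$ as the law of $\mathbf S^{(t)}(1)$, and then cites Theorem \ref{thm:QCLT} directly (leaving the continuous-mapping-at-time-$1$ step implicit, which you have usefully made explicit). No substantive differences.
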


\begin{proof}
Fix an environment $\omega$. As we have shown in Lemma \ref{equivalence}, $p(t,y;\omega)=\pr^{\omega}(X_t=(t,y))$ for all $t \in \N$ and all $y \in \Z$ so the scaled mass distribution $\mu_t$ at time $t$ is given by the distribution of $\mathbf{S}^{(n)}(t)$. The corollary follows directly from Theorem \ref{thm:QCLT}.
\end{proof}

Before we set off proving the theorem, we recall some of the machinery involved in proving \QCLT in general space-time random environments, mostly taken from \cite{timo}.

\section{Quenched invariance principle in general space-time random environments}
Let
\begin{eqnarray*}
D(\omega)=\Eomega_{(0,0)}(X_1)=\left(1,\frac{e^+(0,0)-e^-(0,0)}{v(0,0)}\right).
\end{eqnarray*}
For a bounded measurable function $h$ on $\Omega$ define the operator
\begin{eqnarray*}
\Pi h (\omega)=\frac{e^+(0,0)h(T_{(1,1)}\omega) + e^-(0,0)h(T_{(1,-1)}\omega)}{v(0,0)}.
\end{eqnarray*}
The operator $\displaystyle{h \mapsto \Pi h - h}$ defines the generator of the Markov process $\{T_{X_n}(\omega)\}_{n \ge 0}$ of the environment as `seen from the particle', i.e. keeping the particle fixed at the origin and shifting the environment around it. The transition probability of this Markov process is given by
 \begin{eqnarray*}
\pi(\omega ,A)=\Pomega_{(0,0)}(T_{X_1}(\omega) \in A).
\end{eqnarray*}
This Markov process, first conceived by \cite{papavaradhan} plays a key role in the arguments.\\\\
Now suppose that this Markov process has a stationary ergodic measure $\mathbb{P}^{\infty}$. Then we can extend the operator  $\Pi$ to a contraction on $L^p(\Omega,\mathbb{P}^{\infty})$ for any $p \in [1,\infty]$. Denote the joint law of $(\omega,T_{X_1}(\omega))$ by
\begin{eqnarray*}
\mu_2^{\infty}(d\omega_0,d\omega_1)=\pi(\omega_0,d\omega_1)\mathbb{P}^{\infty}(d\omega_0).
\end{eqnarray*}
For a fixed $\omega$, define the filtration $\left(\mathcal{F}^{\omega}_n\right)_{n\ge 0}$ where $\displaystyle{\mathcal{F}^{\omega}_n=\sigma(X_0,X_1,\cdots,X_n)}$. The main idea involved in proving \QCLT is to create a martingale $M$ with respect to $(\mathbb{P}^{\omega}_{(0,0)},\mathcal{F}^{\omega}_n)$ that is `close' to the process $X$ in some sense, and then to apply martingale central limit theorems to $M$ and translate the results to $X$. The main challenge involved is in the last step: to control the error involved in estimating $X$ by $M$. To prove convergence of $X$ to a Brownian motion in the Skorohod topology, we need to show that the error is uniformly small along a `typical' path of $X$. The way to do this, which we elaborate now, has its roots in the seminal work of Kipnis and Varadhan, and subsequently extended to the non-reversible set-up by Maxwell and Woodroofe \cite{maxwellwoodroofe} and Derriennic and Lin \cite{lin}.\\\\
Towards this end, note that the process
\begin{eqnarray*}
\overline{X}_n=X_n-\sum_{k=0}^{n-1} D\left(T_{X_k}(\omega)\right)
\end{eqnarray*}
is a $(\mathbb{P}^{\omega}_{(0,0)},\mathcal{F}^{\omega}_n)$ martingale. So, if we can prove that $\sum_{k=0}^{n-1} D\left(T_{X_k}(\omega)\right)-(n,0)$ can be expressed as a $(\mathbb{P}^{\omega}_{(0,0)},\mathcal{F}^{\omega}_n)$ martingale plus some error that can be controlled, a quenched CLT can be established for $(X_n)_{n \ge 0}$. We also mention here that to get a non-trivial quenched CLT, in the proof of Theorem~\ref{thm:QCLT} we need to show that the two martingales obtained as described above should not cancel each other (i.e. their sum should not be identically zero). Now, a standard way to have this decomposition is to investigate whether there exists a solution $h$ to \textit{Poisson's Equation}
\begin{eqnarray*}
h=\Pi h + D-(1,0).
\end{eqnarray*}
If such a solution $h$ did exist, we could write down
\begin{eqnarray*}
\sum_{k=0}^{n-1} D\left(T_{X_k}(\omega)\right)-(n,0)=\sum_{k=0}^{n-1}\left[h\left(T_{X_{k+1}}(\omega)\right)-\Pi h\left(T_{X_k}(\omega)\right)\right] + \left[h\left(T_{X_0}(\omega)\right)-\Pi h\left(T_{X_{n-1}}(\omega)\right)\right].
\end{eqnarray*} 

Note that the first term in the above expression is a $(\mathbb{P}^{\omega}_{(0,0)},\mathcal{F}^{\omega}_n)$ martingale and the second term is $L^2$ bounded with respect to $\mathbb{P}^{\infty} \otimes \mathbb{P}^{\omega}_{(0,0)}$. This would immediately give us the result. But unfortunately, a solution to the Poisson equation might not exist. To see this, note that if it did, then
\begin{eqnarray*}
\mathbb{E}^{\infty}\left\vert\Eomega_{(0,0)}( X_n-(n,0))\right\vert^2=\mathbb{E}^{\infty}\left\vert\Eomega_{(0,0)}h\left(T_{X_0}(\omega)\right)-\Eomega_{(0,0)}\Pi h\left(T_{X_{n-1}}(\omega)\right)\right\vert^2=O(1),
\end{eqnarray*}
but the above does not hold for the i.i.d.\ space-time product environment where $\mathbb{E}^{\infty}\left\vert\Eomega_{(0,0)} (X_n-(n,0))\right\vert^2$ grows like $\sqrt{n}$.
Rassoul-Agha and Sepp{\"a}l{\"a}inen \cite{timo} follow an extension of this idea, first introduced by \cite{kipnis}, and later extended by Toth \cite{toth} and subsequently Maxwell and Woodroofe \cite{maxwellwoodroofe}, to deal with the case where a solution to Poisson equation does not exist.

 Let $h_{\epsilon}$ be a solution of
\begin{eqnarray*}
(1+\epsilon)h=\Pi h + D-(1,0).
\end{eqnarray*}
Call $g=D-(1,0)$. Then, the solution of the above can be written as
\begin{eqnarray}\label{eqnarray:hepsilon}
h_{\epsilon}=\sum_{k=1}^{\infty}\frac{\Pi^{k-1}g}{(1+\epsilon)^k}=\epsilon\sum_{n=1}^{\infty}\frac{\sum_{k=0}^{n-1}\Pi^{k-1}g}{(1+\epsilon)^{n+1}},
\end{eqnarray}
which is in $L^2(\Omega,\mathbb{P}^{\infty})$. Define
\begin{eqnarray*}
H_{\epsilon}(\omega_0,\omega_1)=h_{\epsilon}(\omega_1)-\Pi h_{\epsilon}(\omega_0).
\end{eqnarray*}
Then, as our random walk has bounded range, Theorem $2$ of \cite{timo} says the following:

\begin{thm}\label{thm:SR}
Let $\mathbb{P}^{\infty}$ be any stationary ergodic probability measure for the Markov process on the environment $\Omega$ generated by $\Pi - I$. Also assume that there is $\alpha<1$ such that
\begin{eqnarray}\label{eqnarray:momentcond}
\mathbb{E}^{\infty}\left\vert\Eomega_{(0,0)}(X_n-(n,0))\right\vert^2=O(n^{\alpha}).
\end{eqnarray}
Then $H=\lim_{\epsilon \rightarrow 0}H_{\epsilon}$ exists in $L^2(\mu_2^{\infty})$ and for $\mathbb{P}^{\infty}$ a.e. $\omega$, $\mathbf{S}^{(n)}$ converges weakly to a Brownian motion with variance
\begin{eqnarray}\label{eqnarray:diffcoeff}
\sigma^2=\mathbb{E}^{\infty}\Eomega_{(0,0)}\left( X_1-D(\omega)+H(\omega,T_{X_1}(\omega))\right)^2.
\end{eqnarray}
\end{thm}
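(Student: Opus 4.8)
\emph{Proof strategy.} The argument is the Kipnis--Varadhan/Maxwell--Woodroofe martingale approximation adapted to space--time environments, in the form used in \cite{timo} (building on \cite{maxwellwoodroofe}, \cite{lin}). The plan has four moves: (i) use the moment bound \eqref{eqnarray:momentcond} to verify the Maxwell--Woodroofe summability condition for the contraction $\Pi$ on $L^2(\Omega,\mathbb P^\infty)$, which yields both the $L^2(\mu_2^\infty)$-convergence of $H_\epsilon$ and quantitative control of the corrector $h_\epsilon$; (ii) write down, for each $\epsilon>0$, an exact decomposition of $X_n-(n,0)$ into an $(\Pomega_{(0,0)},\mathcal F^\omega_n)$-martingale plus explicit remainders; (iii) let $\epsilon\to 0$, identify the limiting martingale, and apply a functional CLT for martingales after computing its conditional quadratic variation via the ergodic theorem; and (iv) control the remainder uniformly along the path by a diagonal argument in which one first sends $n\to\infty$ and then $\epsilon\to 0$, and finally upgrade the resulting annealed statement to a statement for $\mathbb P^\infty$-a.e.\ $\omega$.

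For the first move, note that $\Eomega_{(0,0)}(X_{k+1}-X_k\mid\mathcal F^\omega_k)=D(T_{X_k}(\omega))$, so with $g=D-(1,0)$ one has $\Eomega_{(0,0)}(X_n-(n,0))=\sum_{k=0}^{n-1}\Pi^k g(\omega)$, and \eqref{eqnarray:momentcond} reads $\big\|\sum_{k=0}^{n-1}\Pi^k g\big\|_{L^2(\mathbb P^\infty)}^2=O(n^\alpha)$. Since $\alpha<1$ we get $\sum_{n\ge 1} n^{-3/2}\big\|\sum_{k=0}^{n-1}\Pi^k g\big\|_{L^2(\mathbb P^\infty)}=\sum_{n\ge 1}O(n^{(\alpha-3)/2})<\infty$, which is precisely the Maxwell--Woodroofe condition for $\Pi$ acting on $g\in L^2(\Omega,\mathbb P^\infty)$. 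The standard estimates then give that $\{H_\epsilon\}_{\epsilon>0}$ is Cauchy in $L^2(\mu_2^\infty)$, so $H=\lim_{\epsilon\to 0}H_\epsilon$ exists there, together with the companion facts $\epsilon\,\|h_\epsilon\|_{L^2(\mathbb P^\infty)}^2\to 0$ and a summable tail bound for $\|H_\epsilon-H\|_{L^2(\mu_2^\infty)}$.

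For the second and third moves, from $(1+\epsilon)h_\epsilon=\Pi h_\epsilon+g$ and the martingale property of $\overline X_n=X_n-\sum_{k=0}^{n-1}D(T_{X_k}(\omega))$ I would write, for each $\epsilon>0$,
$$X_n-(n,0)=M^\epsilon_n+h_\epsilon(T_{X_0}\omega)-h_\epsilon(T_{X_n}\omega)+\epsilon\sum_{k=0}^{n-1}h_\epsilon(T_{X_k}\omega),$$
where $M^\epsilon_n=\sum_{k=0}^{n-1}\big[(X_{k+1}-X_k)-D(T_{X_k}\omega)+H_\epsilon(T_{X_k}\omega,T_{X_{k+1}}\omega)\big]$ is a square-integrable $(\Pomega_{(0,0)},\mathcal F^\omega_n)$-martingale whose increments form a stationary sequence under the environment chain $(T_{X_n}\omega)_{n\ge0}$ with law governed by $\mu_2^\infty$. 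Letting $\epsilon\to 0$, the increments of $M^\epsilon$ converge in $L^2(\mu_2^\infty)$ to those of a limiting martingale $M^0$, whose per-step variance is exactly $\sigma^2$ of \eqref{eqnarray:diffcoeff}. Because the environment chain is stationary and ergodic under $\mathbb P^\infty$, the ergodic theorem gives, for $\mathbb P^\infty$-a.e.\ $\omega$, that $n^{-1}\sum_{k=0}^{n-1}\Eomega_{(0,0)}\big[(M^0_{k+1}-M^0_k)^2\mid\mathcal F^\omega_k\big]\to\sigma^2$, while the Lindeberg condition follows from the uniform integrability of the (stationary, $L^2$) squared increments. The functional CLT for martingales then yields, for $\mathbb P^\infty$-a.e.\ $\omega$, weak convergence of $n^{-1/2}M^0_{[nt]}$ in the Skorohod topology to a Brownian motion of variance $\sigma^2$.

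The fourth move is the crux, and the main obstacle. One must show that $n^{-1/2}\max_{k\le n}\big|X_k-(k,0)-M^0_k\big|\to 0$ in the appropriate sense. Here $n^{-1/2}\max_{k\le n}|h_\epsilon(T_{X_0}\omega)-h_\epsilon(T_{X_k}\omega)|\to 0$ for fixed $\epsilon$ since $h_\epsilon\in L^2(\mathbb P^\infty)$ and a single coordinate enters each term; the term $n^{-1/2}\epsilon\max_{k\le n}\big|\sum_{j<k}h_\epsilon(T_{X_j}\omega)\big|$ is handled by Doob's maximal inequality together with $\epsilon\|h_\epsilon\|_{L^2(\mathbb P^\infty)}^2\to 0$; and $n^{-1/2}\max_{k\le n}|M^\epsilon_k-M^0_k|$ is bounded, again by Doob's inequality, by a constant multiple of $\|H_\epsilon-H\|_{L^2(\mu_2^\infty)}$, which is small. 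Sending $n\to\infty$ first and then $\epsilon\to 0$ shows that the rescaled remainder tends to $0$ in $\mathbb P^\infty\otimes\Pomega_{(0,0)}$-probability; a Borel--Cantelli argument along a subsequence, using the $L^2$ bounds above, then upgrades this to convergence for $\mathbb P^\infty$-a.e.\ $\omega$. Combining with the martingale FCLT gives $\mathbf S^{(n)}\Rightarrow B$ with variance $\sigma^2$ for $\mathbb P^\infty$-a.e.\ $\omega$. Everything except this last uniform-in-$k$ control of the non-martingale part, and the legitimacy of interchanging the limits $n\to\infty$ and $\epsilon\to 0$, is a routine application of the Maxwell--Woodroofe machinery once the moment bound \eqref{eqnarray:momentcond} is available.
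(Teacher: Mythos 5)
The paper does not prove Theorem~\ref{thm:SR} at all: it is quoted verbatim as Theorem~2 of \cite{timo}, so there is no in-paper argument against which to compare yours. Your sketch is a faithful reconstruction of the Kipnis--Varadhan / Maxwell--Woodroofe / Rassoul-Agha--Sepp\"al\"ainen mechanism that underlies that cited theorem. The algebraic decomposition $X_n-(n,0)=M^\epsilon_n+h_\epsilon(T_{X_0}\omega)-h_\epsilon(T_{X_n}\omega)+\epsilon\sum_{k<n}h_\epsilon(T_{X_k}\omega)$ is exactly right (and follows directly from $g=(1+\epsilon)h_\epsilon-\Pi h_\epsilon$ and telescoping), and your derivation of the Maxwell--Woodroofe summability condition $\sum_n n^{-3/2}\bigl\|\sum_{k<n}\Pi^k g\bigr\|_{L^2}<\infty$ from \eqref{eqnarray:momentcond} with $\alpha<1$ is correct, since $\Eomega_{(0,0)}(X_n-(n,0))=\sum_{k<n}\Pi^k g(\omega)$.

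That said, you should be honest that steps~(iii)--(iv) as written are a plan rather than a proof, and the place where you hand-wave is exactly where the actual content of \cite{timo}'s Theorem~2 lives. Two specific points. First, the claim that $n^{-1/2}\max_{k\le n}|h_\epsilon(T_{X_k}\omega)|\to 0$ ``since $h_\epsilon\in L^2$'' needs the stationarity of $(T_{X_k}\omega)_k$ under $\mathbb P^\infty\otimes\Pomega$ plus a Borel--Cantelli argument to get $|h_\epsilon(T_{X_n}\omega)|/\sqrt n\to0$ a.s., and then a maximum-over-$k\le n$ upgrade; it is true but not ``since a single coordinate enters each term.'' Second, and more importantly, your moves~(iii) and (iv) require a genuine interchange of the limits $\epsilon\to0$ and $n\to\infty$ while preserving the \emph{quenched} (a.e.~$\omega$) nature of the conclusion. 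Doob's inequality and the $L^2(\mu_2^\infty)$ bounds control the errors in annealed mean square; going from there to a statement holding for $\mathbb P^\infty$-a.e.\ $\omega$ forces a careful choice of a vanishing sequence $\epsilon_n$ tied to $n$, block decompositions, and Borel--Cantelli along subsequences, together with a quenched maximal inequality. This is precisely the technical core of \cite{timo}, and it is not ``routine'' in the sense that skipping it would constitute a gap if you were asked to supply a full proof. Since the paper itself treats Theorem~\ref{thm:SR} as an imported black box, a correct high-level sketch is all that is being asked of you here, and yours is structurally sound; just do not undersell the difficulty of step~(iv).
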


So, to prove Theorem \ref{thm:QCLT} with the aid of Theorem \ref{thm:SR}, we need to prove the existence of a stationary distribution $\mathbb{P}^{\infty}$ for the Markov process of the environment as seen by the particle (generated by $\Pi-I$), the moment bound (\ref{eqnarray:momentcond}) and the ergodicity of the Markov chain $\{T_{X_n}(\omega)\}$ with respect to $\mathbb{P}^{\infty}$. We prove these in the following section.

\section{Proof of Theorem \ref{thm:QCLT}}
\begin{pfofthm}{\ref{thm:QCLT}}
We break this proof up into three parts corresponding to the three conditions in Theorem \ref{thm:SR}. In Section \ref{subsection:stationarity}, we show the existence of a stationary measure $\mathbb{P}^{\infty}$ for the Markov process of the environment with Lemma \ref{lem:stationary}. In Section \ref{subsection:momentcondition}, we prove Lemma \ref{lem:momentbound} which gives a moment condition that verifies (\ref{eqnarray:momentcond}). In Section \ref{subsection:ergodicity}, we prove the ergodicity of the Markov process $\{T_{X_n}(\omega)\}_{n \ge 1}$ with respect to the stationary measure $\mathbb{P}^{\infty}$. This is in Lemma \ref{elevated}. These three lemmas verify the hypothesis of Theorem \ref{thm:SR} thus proving the desired almost sure invariance principle. 

We now prove that the \textit{annealed law} of $X_n$ is that of $(n,S_n)$, where $S_n$ is a simple random walk with steps $+1,-1$ with equal probability. This shows that $\sigma^2$ in Theorem \ref{thm:SR} is one for our case.

We show this when $X_n$ starts at $(0,0)$. The general case follows similarly. For any $i \in \Z$, let $\mathcal{F}_i$ denote the filtration
\begin{eqnarray}\label{eqnarray:sigmafield}
\mathcal{F}_i =\sigma \{e^+(t,y),e^-(t,y): \ t < i, \ y \in \mathbb{Z}\}.
\end{eqnarray}
Observe that
\begin{eqnarray}\label{eqnarray:bin}
e^+(i,y) | \mathcal{F}_i \sim \operatorname{Bin}\left(v(i,y),1/2\right)\nonumber\\
e^-(i,y) | \mathcal{F}_i \sim \operatorname{Bin}\left(v(i,y),1/2\right).
\end{eqnarray}
Then it follows from (\ref{ravegreen}) and \eqref{eqnarray:bin} that
\begin{align*}
\tilde{\mathbb{E}}\left(\Pomega_{(0,0)}(X_{n+1}=(t'+1,y'+1), X_n=(t',y')) \Big\lvert \mathcal{F}_i\right)&=\frac{1}{2}\Pomega_{(0,0)}(X_n=(t',y')).
\end{align*}
Taking expectation with respect to $\tilde{\mathbb{P}}$ on both sides, we see that the annealed law of $X_n$ is that of $(n,S_n)$.
\end{pfofthm} 


\subsection{Stationarity}\label{subsection:stationarity}
In Lemma \ref{lem:stationary}, we show that the measure $\tilde{\mathbb{P}}$ is in fact a stationary measure for the Markov process of the environment.
\begin{lem}\label{lem:stationary}
The measure $\tilde{\mathbb{P}}$ is stationary for the Markov process on $\Omega$ as seen from the particle, that is, the Markov process $\{T_{X_n}(\omega)\}_{n \ge 1}$.
\end{lem}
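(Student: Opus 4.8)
The plan is to show directly that $\tilde{\mathbb{P}}$ is invariant under the transition kernel $\pi(\omega,\cdot)$ of the environment seen from the particle, i.e.\ that for every bounded measurable $f$ on $\Omega$ we have $\mathbb{E}^{\tilde{\mathbb{P}}}\!\left[\pi f\right] = \mathbb{E}^{\tilde{\mathbb{P}}}[f]$, where $\pi f(\omega) = \Eomega_{(0,0)} f(T_{X_1}(\omega)) = \frac{e^+(0,0)}{v(0,0)} f(T_{(1,1)}\omega) + \frac{e^-(0,0)}{v(0,0)} f(T_{(1,-1)}\omega) = \Pi f(\omega)$. Unwinding the definition of $\tilde{\mathbb{P}}$, this amounts to proving
\begin{equation*}
\mathbb{E}\!\left[ v(0,0)\,\Pi f(\omega)\right] = \mathbb{E}\!\left[ v(0,0) f(\omega)\right],
\end{equation*}
where $\mathbb{E}$ is expectation under the original shift-invariant measure $\mathbb{P}$ and we use the convention that terms with $v(0,0)=0$ vanish (note $v(0,0)\,\Pi f$ is genuinely bounded since the factor $v(0,0)$ cancels the denominator). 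So the left side equals $\mathbb{E}\!\left[ e^+(0,0) f(T_{(1,1)}\omega) + e^-(0,0) f(T_{(1,-1)}\omega)\right]$.

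The key computation is then to evaluate $\mathbb{E}\!\left[ e^+(0,0) f(T_{(1,1)}\omega)\right]$ and $\mathbb{E}\!\left[ e^-(0,0) f(T_{(1,-1)}\omega)\right]$. First I would use shift-invariance of $\mathbb{P}$: applying $T_{(-1,-1)}$ in the first term and $T_{(-1,1)}$ in the second,
\begin{equation*}
\mathbb{E}\!\left[ e^+(0,0) f(T_{(1,1)}\omega)\right] = \mathbb{E}\!\left[ e^+(-1,-1) f(\omega)\right], \qquad \mathbb{E}\!\left[ e^-(0,0) f(T_{(1,-1)}\omega)\right] = \mathbb{E}\!\left[ e^-(-1,1) f(\omega)\right].
\end{equation*}
Adding these, the left side of the desired identity equals $\mathbb{E}\!\left[\left(e^+(-1,-1) + e^-(-1,1)\right) f(\omega)\right]$. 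But the identity recorded right after the definition of the model, $v(t,y) = e^+(t-1,y-1) + e^-(t-1,y+1)$, gives with $(t,y)=(0,0)$ precisely $e^+(-1,-1) + e^-(-1,1) = v(0,0)$. Hence the left side is $\mathbb{E}[v(0,0) f(\omega)]$, which is exactly the right side, completing the proof.

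I do not expect a serious obstacle here; the argument is essentially bookkeeping with the shift invariance of $\mathbb{P}$ and the deterministic conservation law $v(t,y) = e^+(t-1,y-1)+e^-(t-1,y+1)$. The one point that needs a word of care is the convention about dividing by $v(0,0)$: one must check that all the manipulations are legitimate despite the possible vanishing denominator — this is handled by observing that under $\tilde{\mathbb{P}}$ we have $v(0,0)>0$ almost surely, that $\tilde{\mathbb{P}} \ll \mathbb{P}$, and that $v(0,0)\,\Pi f(\omega) = e^+(0,0) f(T_{(1,1)}\omega) + e^-(0,0) f(T_{(1,-1)}\omega)$ as an identity of bounded functions on $\{v(0,0)>0\}$ (and both sides are $0$ on $\{v(0,0)=0\}$ under our convention), so no indeterminate expression ever actually appears. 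A secondary remark worth including is that it then follows that the environment chain started from $\tilde{\mathbb{P}}$ is stationary, and combined with the ergodicity established later (Lemma \ref{elevated}) this supplies the measure $\mathbb{P}^\infty = \tilde{\mathbb{P}}$ required by Theorem \ref{thm:SR}.
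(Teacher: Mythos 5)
Your proof is correct and is essentially identical to the paper's: both unwind $\tilde{\mathbb{E}}(\Pi f) = \mathbb{E}(v(0,0)\,\Pi f)$, cancel the denominator, apply shift-invariance of $\mathbb{P}$ to move the edge counts to $e^+(-1,-1)$ and $e^-(-1,1)$, and conclude via the conservation identity $e^+(-1,-1)+e^-(-1,1)=v(0,0)$. The added remark about the $v(0,0)=0$ convention is a reasonable bit of extra care but does not change the argument.
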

\begin{proof}
Let $\tilde{\mathbb{E}}$ denote the corresponding expectation. Take $\psi$ measurable with respect to $\mathcal{G}$. It will suffice to show that $\tilde{\mathbb{E}}(\Pi \psi)=\tilde{\mathbb{E}}(\psi)$. Note that
\begin{eqnarray*}
\tilde{\mathbb{E}}(\Pi \psi)&=&\mathbb{E}(v(0,0)\Pi \psi)\\
&=&\mathbb{E}\left(e^+(0,0)\psi(T_{(1,1)}\omega)\right)+ \mathbb{E}\left(e^-(0,0) \psi(T_{(1,-1)}\omega)\right)\\
&=&\mathbb{E}\left(e^+(-1,-1)\psi(\omega)\right)+ \mathbb{E}\left(e^-(-1,1) \psi(\omega)\right)\ \mbox{ (by translation invariance of $\mathbb{P}$)}\\
&=&\mathbb{E}\left(v(0,0)\psi(\omega)\right) \hspace{90pt} \mbox{ (as $e^+(-1,-1)+e^-(-1,1)=v(0,0)$)}\\
&=&\tilde{\mathbb{E}}(\psi).
\end{eqnarray*}
This proves the lemma.
\end{proof}


\subsection{Moment condition}\label{subsection:momentcondition}

In this section we verify the moment condition by proving the following lemma.

\begin{lem}\label{lem:momentbound}
$\tilde{\mathbb{E}}\left\vert\Eomega_{(0,0)}(X_n)-(n,0)\right\vert^2= O(\sqrt{n}(\log n)^3)$.
\end{lem}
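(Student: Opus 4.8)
The plan is the following. Under $\Pomega_{(0,0)}$ the first coordinate of $X_n$ equals the deterministic value $n$, so, writing $X_n=(n,Y_n)$, we have $\Eomega_{(0,0)}(X_n)-(n,0)=(0,m_n(\omega))$ with $m_n(\omega):=\Eomega_{(0,0)}(Y_n)$, and the claim becomes $\tilde{\mathbb{E}}[m_n(\omega)^2]=O(\sqrt n(\log n)^3)$. First I would write $m_n$ as a sum of orthogonal martingale increments in the ``time'' filtration $(\mathcal{F}_i)$ of \eqref{eqnarray:sigmafield}. Putting $d(k,y;\omega):=\big(e^+(k,y)-e^-(k,y)\big)/v(k,y)$ (and $0$ if $v(k,y)=0$) and conditioning on the walk's position at time $k$, Lemma~\ref{equivalence} gives
\begin{equation*}
m_n(\omega)=\sum_{k=0}^{n-1}\Delta_k(\omega),\qquad \Delta_k(\omega):=\sum_{y\in\mathbb{Z}}p(k,y;\omega)\,d(k,y;\omega).
\end{equation*}
From the path representation of $p$, the function $p(k,\cdot\,;\omega)$ is $\mathcal{F}_k$-measurable and $p(k,y;\omega)>0$ forces $v(k,y)\ge1$; and since the factor $v(0,0)$ defining $\tilde{\mathbb{P}}$ is $\mathcal{F}_0$-measurable, conditional expectations given $\mathcal{F}_k$ agree under $\mathbb{P}$ and $\tilde{\mathbb{P}}$, so \eqref{eqnarray:bin} yields $\tilde{\mathbb{E}}[d(k,y;\omega)\mid\mathcal{F}_k]=0$ and hence $\tilde{\mathbb{E}}[\Delta_k\mid\mathcal{F}_k]=0$. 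Since $|\Delta_k|\le1$, the $\Delta_k$ are orthogonal in $L^2(\tilde{\mathbb{P}})$, so $\tilde{\mathbb{E}}[m_n^2]=\sum_{k=0}^{n-1}\tilde{\mathbb{E}}[\Delta_k^2]$.

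Next I would compute the increments' second moments. Conditionally on $\mathcal{F}_k$ the variables $\{d(k,y;\omega)\}_{y\in\mathbb{Z}}$ are independent --- they are built from disjoint families of coin flips --- with $\tilde{\mathbb{E}}[d(k,y;\omega)^2\mid\mathcal{F}_k]=1/v(k,y)$ on $\{v(k,y)\ge1\}$ (a binomial variance), whence
\begin{equation*}
\tilde{\mathbb{E}}[\Delta_k^2\mid\mathcal{F}_k]=\sum_{y:\,v(k,y)\ge1}\frac{p(k,y;\omega)^2}{v(k,y)}\ \le\ \sum_{y}p(k,y;\omega)^2 .
\end{equation*}
By Lemma~\ref{equivalence}, $\sum_{y}p(k,y;\omega)^2$ is the $\Pomega_{(0,0)}$-probability that two copies of the walk which are independent given $\omega$ and both started at $(0,0)$ are at the same site at time $k$, so summing over $k$ gives
\begin{equation*}
\tilde{\mathbb{E}}[m_n^2]\ \le\ \sum_{k=0}^{n-1}\tilde{\mathbb{E}}\Big[\sum_{y}p(k,y;\omega)^2\Big],
\end{equation*}
which is the annealed expected number of indices $k\in\{0,\dots,n-1\}$ at which two such coupled walks meet.

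The remaining, and main, task is to bound this expected meeting count by $O(\sqrt n(\log n)^3)$. Let $Z_k$ be the spatial difference of the two coupled walks. One checks that $Z$ is an annealed martingale whose quadratic variation grows by $2$ at each step with $Z_k\neq0$, by $2-2/v(k,\cdot)\ge1$ at each step where the walks coincide at a site with $v\ge2$, and by $0$ at each step where they coincide at a site with $v=1$ --- the last being precisely the ``stickiness'' in which $\omega$ forces the two walks to move in lockstep. Accordingly one splits $\{k<n:X_k=X_k'\}$ into maximal runs: the $\Theta(1)$ growth of the quadratic variation off the sticky runs makes $Z$ behave diffusively, so the \emph{number} of runs is $O(\sqrt n)$ in annealed expectation, while each run is, outside an event of small probability, essentially confined to a path of $\omega$ along which $v\equiv1$, whose length is $O(\log n)$ with overwhelming probability by Lemma~\ref{lem:poisson}. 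Combining these with union bounds over the $O(n)$ candidate run locations --- which is where the powers of $\log n$ are spent --- yields the stated estimate.

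I expect the last step to be the real obstacle. Because the model is not elliptic --- there are space-time points where the next move is dictated by $\omega$, and long paths along which $v\equiv1$ prevent a stuck pair from separating --- one cannot, during the stretches between meetings, couple the two walks to genuinely independent random walks; so the diffusive control of the number and the duration of the excursions of $Z$, and the reduction of a general run of coincidences to the forced case handled by Lemma~\ref{lem:poisson}, have to be wrung out of the martingale structure above rather than imported from a standard ellipticity argument.
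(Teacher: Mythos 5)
Your first half is exactly the paper's Step~1 (Lemma~\ref{fortythree}): the orthogonal decomposition in the time filtration $(\mathcal{F}_i)$, the computation $\tilde{\mathbb{E}}[d(k,y;\omega)^2\mid\mathcal{F}_k]=1/v(k,y)$, and the identification of $\sum_y p(k,y;\omega)^2$ with the collision probability of two walks in the same environment all match. The computation of the one-step conditional variance of $Z$ (namely $2$ off the diagonal and $2-2/v$ on it) is also correct and is the content of Lemma~\ref{lem:walkatzero}.

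The gap is in the bound on the holding times. You assert that each maximal run of coincidences ``is, outside an event of small probability, essentially confined to a path of $\omega$ along which $v\equiv 1$, whose length is $O(\log n)$ with overwhelming probability by Lemma~\ref{lem:poisson}.'' This is wrong on two counts. First, Lemma~\ref{lem:poisson} gives $O(\log^2 n)$, not $O(\log n)$, for the time between consecutive visits to sites with $v\ge 2$. Second, and more importantly, a run of coincidences is \emph{not} confined to a $v\equiv 1$ path: when the coupled walks sit together at a site with $v\ge 2$ they separate only with probability at least $1/4$, so the run typically passes through several such sites before breaking. Bounding the run length therefore requires a second input that your sketch omits entirely, namely a geometric tail bound on the number of visits to $v\ge 2$ sites within a single run (the paper's Lemma~\ref{lem:stopping}, proved by applying the strong Markov property at the stopping times $I_i^{(j)}$ and using $\tilde{\mathbb{E}}^*[\mathbbm{1}(X_{I_i^{(j)}+1}=\tilde X_{I_i^{(j)}+1})\mid\mathcal{F}^*_{I_i^{(j)}}]\le 3/4$). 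Only the product of these two bounds --- $O(\log n)$ escape attempts, each costing $O(\log^2 n)$ steps --- yields the $O(\log^3 n)$ run length (Corollary~\ref{cor:holding}); the power of $\log$ is not ``spent on union bounds'' but is built into this two-stage estimate. Your appeal to ``diffusive control'' for the number of runs is also vaguer than the paper's argument, which uses the exact distributional identity of the excursion lengths of $Y$ with those of a lazy random walk, but that part of the sketch can be repaired. As written, though, the per-run bound does not follow from what you have, and without Lemma~\ref{lem:stopping} the argument does not close.
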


\begin{sketchpfoflem}{\ref{lem:momentbound}}
Before giving the complete proof, we start by sketching the main ideas of the proof.

Our strategy is as follows. 
Fix an environment $\omega$.
Let $X$ and $\tilde{X}$ be two independent copies of the random walk starting from $(0,0)$ and running in the \textit{same environment} $\omega$. 
Also define $Y_i$ by $(0,Y_i)=X_i-\tilde{X}_i$ 
and let
 $\tilde{\mathbb{P}}^*$ be the law of the Markov process $$\left(X_i,\tilde{X}_i, T_{X_i},T_{\tilde{X}_i}\right).$$ 
Finally let $\tilde{\mathbb{E}}^*$ be its expectation.
 
 \begin{enumerate}
\item[\bf{Step 1}] \label{duwamish}  
Our first task is to establish
\begin{equation}\label{eqnarray:findY2}
\tilde{\mathbb{E}}\left\vert\Eomega_{(0,0)}(X_n)-(n,0)\right\vert^2
\leq \tilde{\mathbb{E}}^*\bigg(\#\{i:\ Y_i=0\}\bigg).
\end{equation}

\item[\bf{Step 2}] 
Next we show that $Y_i$ behaves like (two times) a lazy random walk when $Y_i \neq 0$.
But when $Y_i=0$ the distribution of $Y_{i+1}$ has a complicated dependence structure.

\item[\bf{Step 3}] 
We use the sequence $\{Y_i\}_{i=0}^{n}$ to divide the interval $[0,n]$ into excursions (maximal connected intervals where $Y_i \neq 0$) and holding times 
(maximal connected intervals where $Y_i = 0$). By the previous step we know the distribution of excursion lengths. Much of the work of this proof is in estimating the distribution of the lengths of the holding times.

\item[\bf{Step 4}] 
We bound the right hand side of (\ref{eqnarray:findY2}) by breaking it up into two parts. One is when the longest holding time is at most  $k\log^3(n)$ and the other is when the longest holding time is longer than $k\log^3(n)$.
Using the fact that excursions have the same distribution as a lazy random walk we are able to show that for a typical sequence $Y_i$ there are about $\sqrt{n}$ holds (and also about $\sqrt{n}$ excursions).  So the first part contributes approximately
$\sqrt{n}\log^3(n)$ to the expectation. We show that the probability of a long holding time is going to zero sufficiently quickly so the second part is contributing very little to the expectation (provided that $k$ is sufficiently large). Adding up these two bounds completes the proof.

\end{enumerate}

\end{sketchpfoflem}


\begin{lem}\label{fortythree}
\begin{equation}\label{eqnarray:findY3}
\tilde{\mathbb{E}}\left\vert\Eomega_{(0,0)}(X_n)-(n,0)\right\vert^2
\leq \sum_{i=0}^{n-1}\tilde{\mathbb{P}}^*(Y_i=0)
= \tilde{\mathbb{E}}^*\bigg(\#\{i \le n:\ Y_i=0\}\bigg).
\end{equation}

\end{lem}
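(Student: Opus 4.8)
The plan is to reduce the left-hand side to a two-walk quantity and then run a second-moment computation that exploits the binomial structure of the environment. Since the time coordinate of $X$ increases deterministically by $1$ at each step, $X_n-(n,0)$ has zero first coordinate; writing $X_n=(n,W_n)$ with $W_0=0$ (so that $Y_i=W_i-\tilde W_i$), we get $\bigl\vert\Eomega_{(0,0)}(X_n)-(n,0)\bigr\vert^2=\bigl(\Eomega_{(0,0)}(W_n)\bigr)^2$. For a fixed $\omega$ the two copies $X,\tilde X$ are independent with the same law $\Pomega_{(0,0)}$, so $\bigl(\Eomega_{(0,0)}(W_n)\bigr)^2$ equals the expectation of $W_n\tilde W_n$ under $\Pomega_{(0,0)}\otimes\Pomega_{(0,0)}$. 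Integrating against $\tilde{\mathbb{P}}$ and recalling the definition of $\tilde{\mathbb{P}}^{*}$,
$$\tilde{\mathbb{E}}\bigl\vert\Eomega_{(0,0)}(X_n)-(n,0)\bigr\vert^2=\tilde{\mathbb{E}}^{*}\bigl(W_n\tilde W_n\bigr),$$
so it suffices to bound $\tilde{\mathbb{E}}^{*}(W_n\tilde W_n)$ by $\sum_{i=0}^{n-1}\tilde{\mathbb{P}}^{*}(Y_i=0)$.

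Next I would set $\xi_i=W_{i+1}-W_i\in\{\pm1\}$ and $\tilde\xi_i=\tilde W_{i+1}-\tilde W_i\in\{\pm1\}$, and let $\mathcal{H}_i$ be the $\sigma$-field generated by $\{e^{\pm}(t,y):t<i,\ y\in\Z\}$ together with $(X_k)_{k\le i}$ and $(\tilde X_k)_{k\le i}$. Since the two trajectories up to time $i$ are functions of the environment at times $<i$ and of extra randomness independent of the environment at time $i$, the conditional law of $\bigl(e^{+}(i,y)\bigr)_{y\in\Z}$ given $\mathcal{H}_i$ is still that of independent $\operatorname{Bin}(v(i,y),1/2)$'s, by \eqref{eqnarray:bin}. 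Using that conditionally on $\mathcal{H}_i$ the walk at $(i,W_i)$ moves $\pm1$ with probability $e^{\pm}(i,W_i)/v(i,W_i)$ (and $v(i,W_i)\ge1$ along any realized path, since the site $(i,W_i)$ is entered via a positive edge), I would record three facts. First, $\tilde{\mathbb{E}}^{*}(\xi_i\mid\mathcal{H}_i)=\tilde{\mathbb{E}}^{*}(\tilde\xi_i\mid\mathcal{H}_i)=0$, because $\tilde{\mathbb{E}}^{*}(e^{+}(i,W_i)\mid\mathcal{H}_i)=v(i,W_i)/2$. Second, on $\{Y_i\neq0\}$ the steps $\xi_i,\tilde\xi_i$ are conditionally independent given $\mathcal{H}_i$ (they use $e^{+}$ at the distinct sites $W_i\neq\tilde W_i$ and independent coins), so $\tilde{\mathbb{E}}^{*}(\xi_i\tilde\xi_i\mid\mathcal{H}_i)=0$ there. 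Third, on $\{Y_i=0\}$, conditionally on $\mathcal{H}_i$ and $e^{+}(i,W_i)$ the two steps are i.i.d.\ with mean $\bigl(2e^{+}(i,W_i)-v(i,W_i)\bigr)/v(i,W_i)$, so averaging and using $\Var\bigl(e^{+}(i,W_i)\mid\mathcal{H}_i\bigr)=v(i,W_i)/4$,
$$\tilde{\mathbb{E}}^{*}(\xi_i\tilde\xi_i\mid\mathcal{H}_i)=\frac{1}{v(i,W_i)}\le1\qquad\text{on }\{Y_i=0\}.$$

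Finally, expand $W_n\tilde W_n=\sum_{i=0}^{n-1}\sum_{j=0}^{n-1}\xi_i\tilde\xi_j$. For $i\neq j$, conditioning on $\mathcal{H}_{\max(i,j)}$ and using the first fact gives $\tilde{\mathbb{E}}^{*}(\xi_i\tilde\xi_j)=0$, since the later increment has conditional mean zero while the earlier one is $\mathcal{H}_{\max(i,j)}$-measurable; the diagonal terms are handled by the second and third facts, which together give $\tilde{\mathbb{E}}^{*}(\xi_i\tilde\xi_i)=\tilde{\mathbb{E}}^{*}\bigl[\mathbf{1}(Y_i=0)/v(i,W_i)\bigr]$. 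Hence
$$\tilde{\mathbb{E}}^{*}\bigl(W_n\tilde W_n\bigr)=\sum_{i=0}^{n-1}\tilde{\mathbb{E}}^{*}\!\left[\frac{\mathbf{1}(Y_i=0)}{v(i,W_i)}\right]\le\sum_{i=0}^{n-1}\tilde{\mathbb{P}}^{*}(Y_i=0)\le\tilde{\mathbb{E}}^{*}\bigl(\#\{i\le n:Y_i=0\}\bigr),$$
which is \eqref{eqnarray:findY3}. I do not expect a genuine obstacle here: the only care needed is bookkeeping the conditional (in)dependence of $\xi_i$ and $\tilde\xi_i$ according to whether $Y_i=0$, and checking that conditioning on the walk positions leaves \eqref{eqnarray:bin} intact. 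The real difficulty of Lemma~\ref{lem:momentbound} lies further on, in estimating $\tilde{\mathbb{P}}^{*}(Y_i=0)$, i.e.\ in controlling the lengths of the holding times of $Y$.
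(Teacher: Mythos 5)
Your proof is correct and follows essentially the same strategy as the paper. The only difference is organizational: you pass to the two-copy picture at the very start, writing $\bigl(\Eomega_{(0,0)}W_n\bigr)^2=\tilde{\mathbb{E}}^{*}(W_n\tilde W_n)$ and then expanding $W_n\tilde W_n=\sum_{i,j}\xi_i\tilde\xi_j$, whereas the paper first expands $\bigl\vert\Eomega_{(0,0)}(X_n)-(n,0)\bigr\vert^2$ as a double sum $\sum_{i,j}\sum_{y,z}\Pomega_{(0,0)}(X_i=(i,y))\Pomega_{(0,0)}(X_j=(j,z))\,g(T_{(i,y)}\omega)\cdot g(T_{(j,z)}\omega)$ and only identifies $\sum_y[\Pomega_{(0,0)}(X_i=(i,y))]^2$ with $\tilde{\mathbb{P}}^{*}(Y_i=0)$ at the end. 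In both cases the cross terms are killed by conditioning on the $\sigma$-field of the environment up to time $\max(i,j)$ and invoking the $\operatorname{Bin}(v,1/2)$ structure of \eqref{eqnarray:bin}, and the diagonal contributes exactly $\mathbf{1}(Y_i=0)/v(i,W_i)\le\mathbf{1}(Y_i=0)$. Your observation that enlarging the filtration to include the realized trajectories (and not merely the environment at earlier times) leaves the conditional binomial law intact is the right justification and matches the role played by $\mathcal{F}_i$ in the paper.
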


\begin{proof}
It follows from the martingale decomposition of $X_n$ that
\begin{equation*}
\Eomega_{(0,0)}(X_n)=\sum_{i=0}^{n-1}\Eomega_{(0,0)} D(T_{X_i}(\omega)).
\end{equation*}
Recall that $g=D-(1,0)$. Therefore,
\begin{eqnarray*}
\tilde{\mathbb{E}}\left\vert\Eomega_{(0,0)}(X_n)-(n,0)\right\vert^2 &=&\tilde{\mathbb{E}}\left\vert\sum_{i=0}^{n-1}\Eomega_{(0,0)} g(T_{X_i}(\omega))\right\vert^2\\
&=& \tilde{\mathbb{E}}\left\vert\sum_{i=0}^{n-1}\sum_{y \in \mathbb{Z}}\Pomega_{(0,0)}(X_i=(i,y))g(T_{(i,y)}\omega)\right\vert^2\\
&=& \tilde{\mathbb{E}}\left[\sum_{i,j=0}^{n-1}\sum_{y,z \in \mathbb{Z}}\Pomega_{(0,0)}(X_i=(i,y))\Pomega_{(0,0)}(X_j=(j,z))g(T_{(i,y)}\omega).g(T_{(j,z)}\omega)\right],
\end{eqnarray*}
where in the above sum, we take a term to be zero if $\Pomega_{(0,0)}(X_i=(i,y))=0$ or $\Pomega_{(0,0)}(X_j=(j,z))=0$. Note that consequently, we only have terms with $v(i,y)\ge 1$ and $v(j,z) \ge 1$.\\\\
If $i>j$, we can condition on $\mathcal{F}_i$ defined in \eqref{eqnarray:sigmafield}
to get\\\\
$\tilde{\mathbb{E}}\left(\Pomega_{(0,0)}(X_i=(i,y))\Pomega_{(0,0)}(X_j=(j,z))g(T_{(i,y)}\omega).g(T_{(j,z)}\omega) \middle| \mathcal{F}_i\right)$\\\\
\hspace*{3cm}$=\Pomega_{(0,0)}(X_i=(i,y))\Pomega_{(0,0)}(X_i=(j,z))g(T_{(j,z)}\omega).\tilde{\mathbb{E}}\left(g(T_{(i,y)}\omega)\middle| \mathcal{F}_i\right)=0$\\\\
using \eqref{eqnarray:bin}.

Also, the terms in the above sum corresponding to $i=j$ and $y \neq z$ vanish as $(e^+(i,y),e^-(i,y))$ and $(e^+(i,z),e^-(i,z))$ are conditionally independent given $\mathcal{F}_i$ implying
\begin{eqnarray*}
\tilde{\mathbb{E}}\left(g(T_{(i,y)}\omega).g(T_{(i,z)}\omega) | \mathcal{F}_i\right)=\tilde{\mathbb{E}}\left(g(T_{(i,y)}\omega)| \mathcal{F}_i\right).\tilde{\mathbb{E}}\left(g(T_{(i,z)}\omega) | \mathcal{F}_i\right)=0.
\end{eqnarray*}
Thus the only terms in the above sum which do not vanish are those with $i=j$ and $y=z$. For these terms, we see that
\begin{eqnarray*}
\tilde{\mathbb{E}}\left(|g(T_{(i,y)}\omega) |^2 |\mathcal{F}_i\right)&=&\tilde{\mathbb{E}}\left(\frac{(e^+(i,y)-e^-(i,y))^2}{v^2(i,y)} | \mathcal{F}_i\right)\\
&=&4\tilde{\mathbb{E}}\left(\frac{(e^+(i,y)-\frac{1}{2}v(i,y))^2}{v^2(i,y)} | \mathcal{F}_i\right)=\frac{1}{v(i,y)}
\end{eqnarray*}
by (\ref{eqnarray:bin}). Thus we get
\begin{eqnarray}\label{eqnarray:findY}
\tilde{\mathbb{E}}\left\vert\Eomega_{(0,0)}(X_n)-(n,0)\right\vert^2&=&\tilde{\mathbb{E}}\left(\sum_{i=0}^{n-1}\sum_{y \in \mathbb{Z}}\left[\Pomega_{(0,0)}(X_i=(i,y))\right]^2\frac{1}{v(i,y)}\right)\nonumber\\
&\le & \tilde{\mathbb{E}}\left(\sum_{i=0}^{n-1}\sum_{y \in \mathbb{Z}}\left[\Pomega_{(0,0)}(X_i=(i,y))\right]^2\right)\nonumber\\
&=&\sum_{i=0}^{n-1}\tilde{\mathbb{P}}^*(Y_i=0).
\end{eqnarray}

\end{proof}

Also, notice that although the two walks $X$ and $\tilde{X}$ are conditionally independent given $\omega$, the annealed joint law of $(X,\tilde{X})$ is far from being the product of the annealed marginal laws of $X$ and $\tilde{X}$, as the subsequent calculations will indicate. This is what creates a major technical obstacle.\\\\

Now we start to understand the annealed law of the  process $Y_i$. We will show that it behaves like a lazy random walk provided that $Y_i \neq 0$, but at 0 it behaves very differently. The following lemma describes the behavior of $Y_i$ away from $0$.
\begin{lem}
If $r \neq 0$, then
\begin{eqnarray*}
\tilde{\mathbb{P}}^*(Y_{n+1}=s\mid Y_n=r,Y_{n-1}=r_{n-1},...,Y_1=r_1)
&=&\begin{cases}
\frac{1}{4} & \mbox{ if } |s-r|=2, \\
\frac{1}{2} & \mbox{ if } s=r.
\end{cases}
\end{eqnarray*}

\end{lem}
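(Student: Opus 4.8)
The plan is to compute the one-step transition probabilities of $Y_i$ directly under the annealed law $\tilde{\mathbb{P}}^*$, by conditioning on the past in a way that decouples the randomness of the two walks from the freshly-revealed part of the environment. Concretely, I would fix a time $n$ and condition on the $\sigma$-field $\mathcal{F}_n = \sigma\{e^+(t,y),e^-(t,y): t<n,\ y\in\mathbb{Z}\}$ from \eqref{eqnarray:sigmafield} together with the trajectories $(X_0,\dots,X_n)$ and $(\tilde X_0,\dots,\tilde X_n)$ up to time $n$. Note that the event $\{Y_n=r,\ Y_{n-1}=r_{n-1},\dots,Y_1=r_1\}$ with $r\neq 0$ forces $X_n$ and $\tilde X_n$ to sit at distinct spatial sites, say $X_n=(n,y)$ and $\tilde X_n=(n,\tilde y)$ with $y\neq \tilde y$; this positional information, and the whole past of both walks, is measurable with respect to the conditioning data.

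The next step is the key observation: the increments $X_{n+1}-X_n$ and $\tilde X_{n+1}-\tilde X_n$ depend on the environment only through $(e^+(n,y),e^-(n,y))$ and $(e^+(n,\tilde y),e^-(n,\tilde y))$ respectively, and by \eqref{eqnarray:bin} these two pairs are, conditionally on $\mathcal{F}_n$, independent with $e^{\pm}(n,y)\sim \mathrm{Bin}(v(n,y),1/2)$ and $e^{\pm}(n,\tilde y)\sim \mathrm{Bin}(v(n,\tilde y),1/2)$ — crucially they involve \emph{different} spatial columns because $y\neq\tilde y$. Hence, given $\mathcal{F}_n$ and the two trajectories, the quenched step distribution of $X$ out of $(n,y)$ is $+1$ or $-1$ each with conditional probability $e^+(n,y)/v(n,y)$ and $e^-(n,y)/v(n,y)$; averaging $e^+(n,y)/v(n,y)$ against the $\mathrm{Bin}(v(n,y),1/2)$ law gives exactly $1/2$ for each direction, and the same for $\tilde X$ independently. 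Moreover the two walks, being conditionally independent given $\omega$ and moving through disjoint environment columns, make independent $\pm1$ moves after averaging. Therefore $Y_{n+1}-Y_n=2(\text{step of }X)-2(\text{step of }\tilde X)\cdot\tfrac12$... more precisely $(0,Y_{n+1})-(0,Y_n)$ equals the difference of the two spatial increments, each a fair $\pm1$, independent of each other; the difference takes values $+2,0,-2$ with probabilities $1/4,1/2,1/4$. Since this conditional law does not depend on the particular values $r,r_{n-1},\dots,r_1$ (only on $r\neq 0$), it survives taking expectation over the conditioning data, yielding the claimed formula.

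The one point that needs a little care — and is really the only subtlety — is making sure that the positivity of all the relevant $v$'s is not an issue: on the event $\{Y_n=r\}$ with $X_n=(n,y)$ we automatically have $\Pomega_{(0,0)}(X_n=(n,y))>0$, which forces $v(n,y)>0$ (and likewise $v(n,\tilde y)>0$), so the conditional step distributions are genuinely well-defined and the convention about zero denominators never bites. I would also remark that $v(n,y)$ and $v(n,\tilde y)$ are $\mathcal{F}_n$-measurable (they equal $e^+(n-1,y-1)+e^-(n-1,y+1)$ etc.), so they are legitimately part of the conditioning and the binomial statement \eqref{eqnarray:bin} applies as stated. Assembling these observations gives the result; I do not expect any serious obstacle here, as the lemma is essentially a bookkeeping consequence of \eqref{eqnarray:bin} and the conditional independence of the two walks given $\omega$ — the reason $Y$ is ``lazy'' being precisely that, after annealing, two fair coin flips differ by $0$ half the time.
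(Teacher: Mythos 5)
Your proposal is correct and follows essentially the same route as the paper: both condition on $\mathcal{F}_n$, use the conditional binomial law \eqref{eqnarray:bin}, and exploit the conditional independence of the columns $(e^{\pm}(n,y))$ and $(e^{\pm}(n,\tilde y))$ for $y\neq\tilde y$ (which is exactly where $r\neq 0$ enters). The paper just writes the same computation more explicitly, as a sum over paths with the inner conditional expectation $\tilde{\mathbb{E}}\bigl[\sum_{y'-z'=s}\Pomega_{(n,y)}(X_1=(1,y'))\Pomega_{(n,z)}(X_1=(1,z'))\mid\mathcal{F}_n\bigr]$ evaluated to $1/4$ or $1/2$; your ``two independent fair $\pm 1$ coins'' summary is the same calculation, and your remarks on $v>0$ and the $\mathcal{F}_n$-measurability of $v(n,\cdot)$ are accurate.
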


\begin{proof}
Note that for $r,s$ with $r \neq 0$ and $s-r=0,+2$ or $-2$, we get
\begin{eqnarray*}
\lefteqn{\tilde{\mathbb{P}}^*(Y_{n+1}=s,Y_n=r,Y_{n-1}=r_{n-1},...,Y_1=r_1)}\\
&=&\tilde{\mathbb{E}}\sum_{y_i-z_i=r_i,y-z=r}\Pomega_{(0,0)}(X_1=(1,y_1),..,X_n=(n,y))\Pomega_{(0,0)}(X_1=(1,z_1),..,X_n=(n,z))\\
&\quad &\hspace{3cm}\times \tilde{\mathbb{E}}\left[\sum_{y'-z'=s}\Pomega_{(n,y)}(X_1=(1,y'))\Pomega_{(n,z)}(X_1=(1,z'))| \mathcal{F}_n\right],
\end{eqnarray*}
where the filtration $\{\mathcal{F}_n\}$ is defined in (\ref{eqnarray:sigmafield}). By (\ref{eqnarray:bin}),
\begin{eqnarray*}
\tilde{\mathbb{E}}\left[\sum_{y'-z'=s}\Pomega_{(n,y)}(X_1=(1,y'))\Pomega_{(n,z)}(X_1=(1,z'))| \mathcal{F}_n\right]
=\begin{cases}
1/4 & \mbox{ when } |s-r|=2,\\
1/2 & \mbox{ when } s=r.
\end{cases}
\end{eqnarray*}
Thus we get
   \begin{eqnarray*}
\lefteqn{\tilde{\mathbb{P}}^*(Y_{n+1}=s,Y_n=r,Y_{n-1}=r_{n-1},...,Y_1=r_1)}\hspace{4cm}&&\\
&=&\begin{cases}
\frac{1}{4}\tilde{\mathbb{P}}^*(Y_n=r,Y_{n-1}=r_{n-1},...,Y_1=r_1) & \mbox{ if } |s-r|=2, \\\\
\frac{1}{2}\tilde{\mathbb{P}}^*(Y_n=r,Y_{n-1}=r_{n-1},...,Y_1=r_1) & \mbox{ if } s=r.
\end{cases}
\end{eqnarray*}
The claim follows.
\end{proof}
Thus we see that away from zero, the process $Y$ behaves as a homogeneous Markov process with the given transition probabilities. But at zero, things are not so nice, as we see in the next lemma. If $Y_n=0$ and $v(X_n)=1$ then $Y_{n+1}=0$ as well.
But the following lemma also shows that if $Y_n=0$ and $v(X_n)>1$ then $Y_{n+1} \neq 0$ with probability at least 1/4, a fact that will become useful later.

\begin{lem}\label{lem:walkatzero}
\begin{eqnarray*}
\lefteqn{\tilde{\mathbb{P}}^*(Y_{n+1}=0,Y_n=0,Y_{n-1}=r_{n-1},...,Y_1=r_1)}\\\\
&=&\tilde{\mathbb{E}}\sum_{y_i-z_i=r_i,y \in \mathbb{Z}}\Pomega_{(0,0)}(X_1=(1,y_1),..,X_n=(n,y))\Pomega_{(0,0)}(X_1=(1,z_1),..,X_n=(n,y))\\
&\quad &\hspace{3cm}\times\frac{1}{2}\left(1+\frac{1}{v(n,y)}\right).
\end{eqnarray*}
\end{lem}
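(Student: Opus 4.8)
The plan is to compute the conditional probability that both walks take a step from a common location $(n,y)$ (since $Y_n=0$ forces $X_n=(n,y)=\tilde X_n$) and end up again at a common location, i.e.\ $Y_{n+1}=0$. Conditioning on $\mathcal{F}_{n+1}$ (equivalently, on the edge crossings at time level $n$), the two walks are conditionally independent, and from $(n,y)$ each moves to $(n+1,y+1)$ with probability $e^+(n,y)/v(n,y)$ and to $(n+1,y-1)$ with probability $e^-(n,y)/v(n,y)$. Hence the conditional probability that $X_{n+1}=\tilde X_{n+1}$ given $\mathcal{F}_{n+1}$ and given both walks sit at $(n,y)$ is
\begin{eqnarray*}
\left(\frac{e^+(n,y)}{v(n,y)}\right)^2+\left(\frac{e^-(n,y)}{v(n,y)}\right)^2.
\end{eqnarray*}

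Next I would take the conditional expectation with respect to $\mathcal{F}_n$ (the information strictly before time level $n$), using the key distributional input \eqref{eqnarray:bin}, namely that $e^+(n,y)\mid\mathcal{F}_n\sim\operatorname{Bin}(v(n,y),1/2)$ with $e^-(n,y)=v(n,y)-e^+(n,y)$. Writing $v=v(n,y)$ and $e^+=e^+(n,y)$, the quantity above equals $\big(2(e^+)^2-2ve^++v^2\big)/v^2 = 1 - 2e^+(v-e^+)/v^2$. Since $\tilde{\mathbb{E}}\big(e^+(v-e^+)\mid\mathcal{F}_n,v\big) = \operatorname{Var}(e^+\mid\mathcal{F}_n) + \big(\tilde{\mathbb{E}}(e^+\mid\mathcal{F}_n)\big)^2 - \tilde{\mathbb{E}}\big((e^+)^2\mid\mathcal{F}_n\big) $, one computes directly from the binomial moments that $\tilde{\mathbb{E}}\big(e^+(v-e^+)\mid\mathcal{F}_n\big) = v(v-1)/4$, so the $\mathcal{F}_n$-conditional probability of $Y_{n+1}=0$ from a common site $(n,y)$ is $1 - (v-1)/(2v) = \tfrac12\big(1+\tfrac{1}{v(n,y)}\big)$.

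Finally I would assemble the identity. Expanding $\tilde{\mathbb{P}}^*(Y_{n+1}=0,Y_n=0,Y_{n-1}=r_{n-1},\dots,Y_1=r_1)$ as a sum over paths $X_1=(1,y_1),\dots,X_n=(n,y)$ and $\tilde X_1=(1,z_1),\dots,\tilde X_n=(n,y)$ with $y_i-z_i=r_i$ (note the endpoints coincide because $Y_n=0$), the path-probability factors $\Pomega_{(0,0)}(X_1=(1,y_1),\dots,X_n=(n,y))$ and $\Pomega_{(0,0)}(X_1=(1,z_1),\dots,X_n=(n,y))$ are $\mathcal{F}_n$-measurable (they depend only on edge crossings at time levels $<n$), so they pull out of the inner conditional expectation, which has just been evaluated to be $\tfrac12\big(1+\tfrac{1}{v(n,y)}\big)$. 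This yields exactly the claimed formula.

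The main obstacle is the bookkeeping in the second step: one must be careful that $v(n,y)$ is $\mathcal{F}_n$-measurable (it is, since $v(n,y)=e^+(n-1,y-1)+e^-(n-1,y+1)$) so that conditioning on $\mathcal{F}_n$ legitimately fixes $v$ while leaving the split $(e^+(n,y),e^-(n,y))$ binomially distributed, and that the two walks really do share the site $(n,y)$ — which is forced by $Y_n=0$ — so that a single binomial split governs \emph{both} transitions and the conditional independence given $\mathcal{F}_{n+1}$ is being applied to two walks at the \emph{same} point rather than two independent splits at distinct points. Once that is set up correctly, the computation of $\tilde{\mathbb{E}}\big(e^+(v-e^+)\mid\mathcal{F}_n\big)=v(v-1)/4$ is a routine second-moment calculation for the binomial distribution.
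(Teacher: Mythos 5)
Your proof follows the same route as the paper: reduce to the quenched conditional probability that the two walks step together from the shared site $(n,y)$, express it as $\bigl((e^+(n,y))^2 + (e^-(n,y))^2\bigr)/v^2(n,y)$, and take the conditional expectation given $\mathcal{F}_n$ using \eqref{eqnarray:bin} to obtain $\tfrac12\bigl(1+\tfrac1{v(n,y)}\bigr)$; the observation that the path-weights are $\mathcal{F}_n$-measurable and the endpoints coincide because $Y_n=0$ is exactly the bookkeeping the paper's display relies on. One slip worth flagging: the displayed identity $\tilde{\mathbb{E}}\bigl(e^+(v-e^+)\mid\mathcal{F}_n\bigr)=\operatorname{Var}(e^+\mid\mathcal{F}_n)+\bigl(\tilde{\mathbb{E}}(e^+\mid\mathcal{F}_n)\bigr)^2-\tilde{\mathbb{E}}\bigl((e^+)^2\mid\mathcal{F}_n\bigr)$ is identically zero (it is just the variance decomposition rearranged), whereas the intended computation is $\tilde{\mathbb{E}}\bigl(e^+(v-e^+)\mid\mathcal{F}_n\bigr)=v\,\tilde{\mathbb{E}}(e^+\mid\mathcal{F}_n)-\tilde{\mathbb{E}}\bigl((e^+)^2\mid\mathcal{F}_n\bigr)=v^2/2-(v/4+v^2/4)=v(v-1)/4$; your final answer $\tfrac12(1+1/v)$ is nonetheless correct and the argument as a whole is sound.
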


\begin{proof}
The following simple calculation proves the lemma.\\
\begin{eqnarray*}
\lefteqn{\tilde{\mathbb{P}}^*(Y_{n+1}=0,Y_n=0,Y_{n-1}=r_{n-1},...,Y_1=r_1)}\\\\
&=&\tilde{\mathbb{E}}\sum_{y_i-z_i=r_i,y \in \mathbb{Z}}\Pomega_{(0,0)}(X_1=(1,y_1),..,X_n=(n,y))\Pomega_{(0,0)}(X_1=(1,z_1),..,X_n=(n,y))\\
&\quad &\hspace{3cm}\times\tilde{\mathbb{E}}\left[\frac{\left(e^+(n,y)\right)^2+\left(e^-(n,y)\right)^2}{v^2(n,y)}| \mathcal{F}_n\right]\\\\\\
&=&\tilde{\mathbb{E}}\sum_{y_i-z_i=r_i,y \in \mathbb{Z}}\Pomega_{(0,0)}(X_1=(1,y_1),..,X_n=(n,y))\Pomega_{(0,0)}(X_1=(1,z_1),..,X_n=(n,y))\\
&\quad &\hspace{3cm}\times\frac{1}{2}\left(1+\frac{1}{v(n,y)}\right).
\end{eqnarray*}
\end{proof}

Note that the presence of the term $\displaystyle{\left(1+\frac{1}{v(n,y)}\right)}$ in the summand makes this process depend on its entire past while making transitions from zero, thus destroying its Markov property. This is the striking difference from its analogue in the i.i.d.\ case studied by \cite{timo}, where the process $Y$ is a homogeneous Markov process perturbed at zero.\\\\
To deal with this problem, we decompose the path $\{Y_i\}$ into \textit{excursions} away from zero and \textit{holding times} at zero. Denote the $j$-th excursion by $e_j$. 
Let the time interval spanned by $e_j$ be $[\alpha_j,\beta_j]$, i.e., $|e_j(0)|=|Y_{\alpha_j}|=2$ and $\beta_j=\inf \{k>\alpha_j: Y_k=0\}$. Let the first holding time at $0$ be $\gamma_0=\alpha_1-1$ and let $\gamma_j=\alpha_{j+1}-\beta_j-1$ be the holding time at zero between $e_j$ and $e_{j+1}$. Define $$a(n)=\sup\{j\ge 1: \alpha_j \le n\}$$ with $a(n)=0$ if the above set is empty. With these defined, we can write down
\begin{eqnarray}\label{eqnarray:holdingexc}
\sum_{i=0}^{n-1}\mathbbm{1}(Y_i=0) &\le & \sum_{i=0}^{a(n)}\gamma_i \le \left(a(n)+1\right)\left(\sup_{j \le n}\gamma_j\right).
\end{eqnarray}
Let the time duration of the $j$-th excursion be denoted by $T_j=\beta_j-\alpha_j$.\\\\
Denote by $R$ the homogeneous random walk starting from zero with probabilities of increments $2,0,-2$ being $1/4,1/2,1/4$ respectively, and let $$R^*_n=\sup_{k \le n}R_k.$$ Denote the law of $R$ by $\mathbb{P}_R$ and the corresponding expectation by $\mathbb{E}_R$. Let $$A_n=\sup\{k \ge 0:T_1+...+T_k \le n\}.$$ It is easy to see that $T_1+...+T_k$ has the same distribution as the hitting time of level $2k$ by $R$. Thus $\{A_n: n \ge 1\}$ and $\{R^*_n/2: n \ge 1\}$ have the same distribution. Note that $a(n)\le A_n+1$. Therefore,
\begin{eqnarray}\label{eqnarray:excursion}
\tilde{\mathbb{E}}^*a(n) &\le & \tilde{\mathbb{E}}^*(A_n+1)\nonumber\\
& = & \mathbb{E}_R(R^*_n/2+1)\nonumber\\
&\le & C\sqrt{n}
\end{eqnarray}
for some constant $C<\infty$. The last step above follows from reflection principle arguments, see \cite{spitzer}.

The excursions away from zero have the same law as those of a lazy random walk $R$ so their distribution is well understood.  Our main challenge is to provide an upper bound on the supremum of the holding times $\gamma_j$. For a lazy random walk $R$, the holding times are i.i.d.\ geometric random variables and we can derive an upper bound on the probability that a holding time is at least $\log n$.  The $\gamma_j$ are far from i.i.d.\ and they depend on the entire past till that time. In spite of this dependence we will bound the probability that a holding time $\gamma_j$ is bigger than $(\log n)^k$ for some positive number $k$. Our bound will be uniform in $j$ and independent of all previous holding times. Using these bounds we will be able to bound the probability that the supremum of the holding times is large.

To do this we note by Lemma \ref{lem:walkatzero} that if $Y_i=0$ and $v(X_i)\ge 2$ then there is at least a probability of $1/4$ that $Y_{i+1}\neq 0$, independent of anything in the process or the environment up to time $i$.

 We now define a set of stopping times to indicate when these times occur. These stopping times will give us our bound on the holding times.

We now define some stopping times for the Markov process $\displaystyle{(X_i, \tilde{X}_i,T_{X_i},T_{\tilde{X}_i})}$. 
For each $j \ge 1$, define:
\begin{eqnarray*}
I_1^{(j)}&=&\beta_j \wedge n,\\
I_{i+1}^{(j)}&=&
\begin{cases}
\inf\{k > I_i^{(j)}: v(X_k) \ge 2\}\wedge n \mbox{ if } X_{I_i^{(j)}+1}=\tilde{X}_{I_i^{(j)}+1},\\
I_i^{(j)} \mbox{ otherwise.}
\end{cases}
\end{eqnarray*}
Thus we get a bi-indexed family $\{I_i^{(j)}: 1 \le i <\infty\}$ which is well defined with probability one. 
Note that for each $j$, the sequence indexed by $i$ eventually becomes constant.\\\\

For $j \ge 1$ with $\beta_j <n$, these stopping times indexed by $i$ represent the times $t \in [\beta_j,\alpha_{j+1}-1] \cap [0,n]$ (the portion of the holding time at zero between $e_j$ and $e_{j+1}$ lying in $[0,n]$) where there are at least two particles at $X_t$ ($=\tilde{X}_t$) and the sequence becomes constant if either time $n$ or time $\alpha_{j+1}-1$ is reached.\\\\
Call the corresponding stopped sigma fields $\{\mathcal{F}^*_{I_i^{(j)}}: 1 \le i <\infty\}$. Let $k_j=\sup\{i\ge 1:I_1^{(j)}<I_2^{(j)}<\cdots<I_i^{(j)}\}$ with $k_j=1$ for a constant sequence. With this notation, $\gamma_j=I_{k_j}^{(j)}-I_1^{(j)}$.\\\\
Control over the holding times $\gamma_j$ is obtained through the following two lemmas. 
\begin{lem}\label{lem:stopping}
\begin{eqnarray}
\tilde{\mathbb{P}}^*(k_j \ge k) \le \left(\frac{3}{4}\right)^{k-1}.
\end{eqnarray}
\end{lem}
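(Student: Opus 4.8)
## Proof Proposal for Lemma \ref{lem:stopping}

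The plan is to show that the bi-indexed sequence of stopping times $I_1^{(j)} < I_2^{(j)} < \cdots$ advances (i.e., fails to become constant) at each step with probability at most $3/4$, and that these ``failure'' events are sufficiently independent across $i$ that the probability of surviving $k-1$ consecutive steps is bounded by $(3/4)^{k-1}$. First I would fix $j$ and condition on the stopped $\sigma$-field $\mathcal{F}^*_{I_i^{(j)}}$. The key structural point is the following: if $I_i^{(j)} < n$ and the sequence has not yet gone constant, then at time $I_i^{(j)}$ we have $Y_{I_i^{(j)}} = 0$ (the two walks are at the same location) and, by the definition of $I_{i+1}^{(j)}$ via the condition $v(X_k) \ge 2$, we are sitting at a space-time point where $v(X_{I_i^{(j)}}) \ge 2$ (for $i \ge 2$ this is built into the definition; for $i = 1$ the sequence only advances past $I_1^{(j)} = \beta_j \wedge n$ in the relevant case). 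I would then invoke Lemma \ref{lem:walkatzero}: at a site with $Y = 0$ and $v \ge 2$, the conditional probability that $Y$ stays at $0$ at the next step is $\frac{1}{2}(1 + \frac{1}{v(n,y)}) \le \frac{1}{2}(1 + \frac{1}{2}) = \frac{3}{4}$, and crucially this bound holds independently of the entire past of the process and environment up to that time (this is exactly the remark made in the text just before the lemma).

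The second step is to convert the one-step bound into the geometric bound. The event $\{k_j \ge k\}$ means $I_1^{(j)} < I_2^{(j)} < \cdots < I_k^{(j)}$, i.e., the sequence strictly increased at least $k-1$ times. Each strict increase from $I_i^{(j)}$ to $I_{i+1}^{(j)}$ requires, in particular, that $X_{I_i^{(j)}+1} = \tilde{X}_{I_i^{(j)}+1}$, i.e., that $Y_{I_i^{(j)}+1} = 0$; this is precisely the ``bad'' event at site $X_{I_i^{(j)}}$, which has conditional probability at most $3/4$ given $\mathcal{F}^*_{I_i^{(j)}}$ by the previous paragraph. I would therefore write
\begin{equation*}
\tilde{\mathbb{P}}^*(k_j \ge k) = \tilde{\mathbb{E}}^*\left[\mathbbm{1}(k_j \ge k-1)\, \tilde{\mathbb{P}}^*\left(Y_{I_{k-1}^{(j)}+1} = 0 \,\middle|\, \mathcal{F}^*_{I_{k-1}^{(j)}}\right)\right] \le \frac{3}{4}\,\tilde{\mathbb{P}}^*(k_j \ge k-1),
\end{equation*}
and iterate down to $\tilde{\mathbb{P}}^*(k_j \ge 1) = 1$, yielding $\tilde{\mathbb{P}}^*(k_j \ge k) \le (3/4)^{k-1}$. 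The use of the strong Markov property for the process $(X_i, \tilde{X}_i, T_{X_i}, T_{\tilde{X}_i})$ at the stopping time $I_{k-1}^{(j)}$ is what makes the conditioning on $\mathcal{F}^*_{I_{k-1}^{(j)}}$ legitimate, and the fact that each $I_i^{(j)}$ is genuinely a stopping time (noted in the text) must be checked — but this is routine since $\{v(X_k) \ge 2\}$ and $\{X_{k+1} = \tilde{X}_{k+1}\}$ are measurable with respect to the natural filtration.

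The main obstacle I anticipate is making precise that the bound from Lemma \ref{lem:walkatzero} really is conditionally independent of $\mathcal{F}^*_{I_i^{(j)}}$ and hence composes multiplicatively. Lemma \ref{lem:walkatzero} as stated gives an identity involving a sum over full path histories with the weight $\frac{1}{2}(1 + 1/v(n,y))$; one must argue that, after conditioning on $\mathcal{F}_{I_i^{(j)}}$ (the environment strictly below the current time level) and on the walks' positions, the conditional law of $e^{\pm}$ at the current site is $\mathrm{Bin}(v, 1/2)$ regardless of how that site was reached, so the chance of re-coalescing, $\frac{(e^+)^2 + (e^-)^2}{v^2}$, has conditional expectation $\frac{1}{2}(1 + 1/v) \le 3/4$ on the event $v \ge 2$ — uniformly, with no dependence on the past. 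Care is needed because $I_i^{(j)}$ is random: one should condition on $\{I_i^{(j)} = m\}$ for each fixed $m$, apply \eqref{eqnarray:bin} at time level $m$ (which is $\mathcal{F}_m$-conditional, hence compatible with everything determined strictly before level $m$, including the stopping decision), and then sum over $m$. Once this uniform conditional bound is in hand, the geometric decay is immediate.
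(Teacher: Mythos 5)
Your proposal is correct and follows essentially the same route as the paper: both apply the strong Markov property at the stopping time $I_{k-1}^{(j)}$, use the conditional $\mathrm{Bin}(v,1/2)$ structure of $(e^+,e^-)$ to bound the one-step re-coalescence probability by $\tfrac{1}{2}\bigl(1+\tfrac{1}{v}\bigr)\le \tfrac{3}{4}$ on the event $v\ge 2$, and iterate to get the geometric bound. The paper writes the induction step exactly as you do and does not spell out the ``condition on $\{I_i^{(j)}=m\}$ and sum'' bookkeeping you flag as a concern, so your extra care there is compatible with, but not required by, the published argument.
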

\begin{proof}
Note that, by the strong Markov property applied at these stopping times,
\begin{eqnarray*}
\tilde{\mathbb{P}}^*(k_j \ge k)&=&\tilde{\mathbb{E}}^*\mathbbm{1}(I_1^{(j)}<I_2^{(j)}<\cdots <I_k^{(j)})\\
&\le &\tilde{\mathbb{E}}^*\mathbbm{1}(I_1^{(j)}<I_2^{(j)}<\cdots <I_{k-1}^{(j)}<n) \ \tilde{\mathbb{E}}^*\left(\mathbbm{1}\left(X_{I_{k-1}^{(j)}+1}=\tilde{X}_{I_{k-1}^{(j)}+1}\right)\middle|\mathcal{F}^*_{I_{k-1}^{(j)}}\right)\\
&=&\tilde{\mathbb{E}}^*\mathbbm{1}(I_1^{(j)}<I_2^{(j)}<\cdots <I_{k-1}^{(j)}<n) \ \tilde{\mathbb{E}}^*\left[\frac{e^+\left(X_{I_{k-1}^{(j)}}\right)^2+e^-\left(X_{I_{k-1}^{(j)}}\right)^2}{v\left(X_{I_{k-1}^{(j)}}\right)^2}\middle|T_{X_{I_{k-1}^{(j)}}}\right] \ \mbox{ (by (\ref{eqnarray:bin})) }\\ 
&=&\tilde{\mathbb{E}}^*\mathbbm{1}(I_1^{(j)}<I_2^{(j)}<\cdots <I_{k-1}^{(j)}<n).\frac{1}{2}\left(1+\frac{1}{v\left(X_{I_{k-1}^{(j)}}\right)}\right)\\
&\le &\frac{3}{4}\tilde{\mathbb{E}}^*\mathbbm{1}(I_1^{(j)}<I_2^{(j)}<\cdots <I_{k-1}^{(j)})\le \left(\frac{3}{4}\right)^{k-1},
\end{eqnarray*}
where the last step follows by induction.
\end{proof}

\begin{lem}\label{lem:poisson}
For sufficiently large $M>0$,
\begin{eqnarray}
\tilde{\mathbb{P}}^*\left(\bigcup_{0\le i,j\le n}\{I_{i+1}^{(j)}-I_i^{(j)}\ge M^2\log^2n\}\right)
\le \frac{2}{n^{M-1}}.
\end{eqnarray}
\end{lem}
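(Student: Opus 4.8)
The quantity $I_{i+1}^{(j)}-I_i^{(j)}$, when it is nonzero, measures the length of a maximal time stretch during which the two walkers $X$ and $\tilde X$ sit on a common site with $v=1$ — that is, they are forced to travel together along a ``unit path'' $\tau$ in the environment $\omega$. So the event in question is essentially contained in the event that $\omega$ contains, somewhere in a relevant space-time window, a path $\tau=\{(t,\tau(t)): t\in[T_1,T_2]\}$ with $v(t,\tau(t))=1$ for all $t$ and $|\tau(t+1)-\tau(t)|=1$, of length at least $M^2\log^2 n$, and moreover that the merged walk actually enters such a path. The plan is to bound the annealed probability of the existence of such a long unit path in the space-time box that the walks can reach by time $n$ (a box of size roughly $n\times n$, so $O(n^2)$ starting points), and then take a union bound over $i,j$ and over starting sites.

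First I would fix a candidate path and estimate, under $\tilde{\mathbb P}$ (equivalently under $\mathbb P$, up to the bounded density $v(0,0)$), the probability that a given nearest-neighbor space-time path $\tau$ of length $L=M^2\log^2 n$ starting at a given point consists entirely of sites with $v=1$. Recall the recursion $v(t,y)=e^+(t-1,y-1)+e^-(t-1,y+1)$ and the key conditional law \eqref{eqnarray:bin}: given $\mathcal F_t$, $e^+(t,y)$ and $e^-(t,y)$ are independent $\operatorname{Bin}(v(t,y),1/2)$. Conditioning successively along $\tau$ from time $T_1$ to $T_2$, the event $v(t+1,\tau(t+1))=1$ requires that among the edge-crossings feeding site $(t+1,\tau(t+1))$ exactly one walker lands there; since $\sum_{t}\sum_y v(t,y)$ is locally a sum of Poisson$(1)$ masses being repeatedly halved by independent fair coins, the chance that an entire length-$L$ path stays at the minimal occupancy $v=1$ decays geometrically, giving a bound of the form $\rho^{L}$ for some fixed $\rho<1$ (one can afford to be crude here — even $\rho$ very close to $1$ suffices). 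More carefully, one should set this up as a supermartingale / one-step conditional estimate: letting $\mathcal E_t$ be the event ``$v(s,\tau(s))=1$ for all $T_1\le s\le t$'', one shows $\tilde{\mathbb P}(\mathcal E_{t+1}\mid \mathcal F_{t+1})\le \rho\,\mathbbm 1_{\mathcal E_t}$ off a null set, using that a site with $v=1$ has probability $1/2$ of sending its single walker each way, and that for the child site to also have $v=1$ no other walker may arrive — an event whose probability is bounded away from $1$ because neighboring occupied sites contribute with positive probability.

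Then I would union-bound: the number of nearest-neighbor paths of length $L$ starting from a fixed space-time point is at most $2^{L}$, and the number of relevant starting points $(t,y)$ (those within the light cone of the origin up to time $n$) is $O(n^2)$. Hence the annealed probability that \emph{some} such long unit path exists and is visited is at most $O(n^2)\cdot 2^{L}\rho^{L} = O(n^2)(2\rho)^{M^2\log^2 n}$. Choosing $M$ large enough that $(2\rho)^{M^2}\le e^{-(M+1)}$, say — which is possible once $\rho<1/2$; if $\rho\ge 1/2$ one instead chains the one-step estimate over blocks to get an effective ratio below $1/2$, or simply uses a sharper occupancy bound — this becomes $O(n^2) n^{-(M+1)\log n}$, which is $\le 2 n^{-(M-1)}$ for $n$ large. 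Finally, one checks that the event $\{I_{i+1}^{(j)}-I_i^{(j)}\ge M^2\log^2 n\}$ genuinely forces the walks onto such a path: by definition $I_{i+1}^{(j)}>I_i^{(j)}$ only when $X_{I_i^{(j)}+1}=\tilde X_{I_i^{(j)}+1}$, and the gap being large means the merged walk did not encounter a site with $v\ge 2$ for that many steps, i.e. it traversed sites all having $v=1$ — which is exactly a unit path $\tau$ in $\omega$ of the stated length; so the event is contained in the ``long unit path exists'' event and the union bound applies.

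\textbf{Main obstacle.} The delicate point is the geometric one-step decay estimate $\tilde{\mathbb P}(v(t+1,\tau(t+1))=1\mid \mathcal F_{t+1}, v(\cdot,\tau(\cdot))=1 \text{ so far})\le\rho<1$, uniformly. The occupancies $v(t,y)$ are strongly correlated across space-time (they come from a fixed family of random walks), and conditioning on a whole trajectory of sites having $v=1$ is conditioning on a rare, highly structured event; one must verify that this conditioning does not conspire to make it ever more likely that the next site is also minimally occupied. The clean way around this is to not condition on the geometry at all but to exploit the independence built into \eqref{eqnarray:bin}: revealing the environment time-slice by time-slice, the splitting coins at time $t$ are fresh fair coins independent of $\mathcal F_t$, so the relevant ratio is controlled by an unconditional binomial/Poisson occupancy computation at each step. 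Getting that reduction precise — ensuring each step only uses the freshly revealed randomness and that ``a neighbor contributes a walker with probability bounded below'' holds given the history — is where essentially all the work of the lemma lies; the counting and the choice of $M$ afterwards are routine.
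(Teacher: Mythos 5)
Your approach is genuinely different from the paper's, and unfortunately it has a real gap at precisely the point you flag as the ``main obstacle.''

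The one-step geometric decay $\tilde{\mathbb P}(\mathcal E_{t+1}\mid \mathcal F_{t})\le \rho\,\mathbbm 1_{\mathcal E_t}$ with a \emph{uniform} $\rho<1$ is simply false in this model. Suppose $v(t,\tau(t))=1$, the single walker there flips right, and $v(t,\tau(t)+2)=0$. Then $(t+1,\tau(t)+1)$ receives exactly one walker, so $v(t+1,\tau(t)+1)=1$ \emph{deterministically}: the conditional probability of continuing the $v=1$ stretch is $1$, not $\le\rho$. The ``fresh coins at time $t$'' of \eqref{eqnarray:bin} cannot rescue this — they only randomize how occupied sites split, not whether neighboring sites are occupied in the first place, and the conditioning on a long isolated stretch precisely biases those occupancies toward $0$. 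So the supermartingale step, and hence the $\rho^L$ bound and the $2^L\rho^L$ union over paths, does not get off the ground. (The path-counting $2^L$ is also inflated: the merged walk at a $v=1$ site deterministically follows the single walker, so there is one candidate path per walker, not $2^L$ per starting point; but this is secondary to the failure of the decay estimate.)

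The paper avoids per-step estimates entirely and instead uses a global crossing argument that exploits the fact that the environment is built from a Poisson cloud of independent simple random walks. By stationarity it suffices to bound $\tilde{\mathbb P}^*(\tau_0\ge t)$ where $\tau_0=\inf\{l\ge 0: v(X_l)\ge 2\}$. One counts the walkers that started at positive even sites and are negative at time $t$ ($N_t^+$), and vice versa ($N_t^-$); each count is Poisson with mean $\mu_t=\sum_{k\ge1}\mathbb P(S_t\ge 2k)\ge C\sqrt t$. If both $N_t^+$ and $N_t^-$ are positive, then whatever the sign of $X_t$, some walker $S^{(2k,i)}$ ($k\neq0$) has swapped sides with $X$; since all these paths share parity, the intermediate value theorem forces an actual coincidence $X_l=S^{(2k,i)}(l)$ at some $l\le t$, and at the \emph{first} such coincidence the walker carrying $X$ and $S^{(2k,i)}$ arrive from distinct sites, so $v(X_l)\ge2$. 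Hence $\tilde{\mathbb P}^*(\tau_0\ge t)\le 2e^{-\mu_t}\le 2e^{-C\sqrt t}$, and plugging in $t=M^2\log^2 n$ and union-bounding over $u\le n$ gives the claim. This sidesteps both the false decay estimate and the conditioning difficulties you correctly anticipated; you would do well to replace the per-path argument with something that, like the paper's, leverages the ambient density of walkers globally rather than step by step.
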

\begin{proof}
For each $u \le n$, let
\begin{eqnarray*}
\tau_u=\inf\{l \ge u:v(X_l) \ge 2\}.
\end{eqnarray*}
Note that by the stationarity of $\{T_{X_n}\}$ under $\tilde{\mathbb{P}}^*$,
\begin{eqnarray*}
\tilde{\mathbb{P}}^*(\tau_u \ge M^2\log^2n)=\tilde{\mathbb{P}}^*(\tau_0 \ge M^2\log^2n).
\end{eqnarray*}
Under $\tilde{\mathbb{P}}^*$, $v(0,k)\sim$ Poi($1$) for $k\neq 0$ and $v(0,0)$ has a \textit{size-biased} Poi($1$) distribution. Let
\begin{eqnarray*}
N_t^+&=&\#\{(k,i): k\ge 1, S^{(2k,i)}(t)<0\},\\
N_t^-&=&\#\{(k,i): k\le -1, S^{(2k,i)}(t)>0\},
\end{eqnarray*}
where $\{S^{(k,i)}\}$ are the walkers defining the environment $\omega$ as discussed in Section \ref{section:description}.
It is easy to check that $N_t^+,N_t^- \sim$ Poi($\mu_t$), where
\begin{equation}\label{equation:poi}
\mu_t=\sum_{k=1}^{\infty}\mathbb{P}(S_t\ge 2k),
\end{equation}
where $S$ is a simple random walk starting from zero. It follows from (\ref{equation:poi}) that there is $0<C<\infty$ such that $\mu_t \ge C\sqrt{t}$ for every $t>0$.

Note that if both $N_t^+$ and $N_t^-$ are non-zero, then the process $X$ starting from $(0,0)$ must intersect at least one random walk $S^{(2k,\cdot)}$, $k \neq 0$ at or before time $t$, and at the time of intersection, say $l$, $v(X_l)\ge 2$. Thus,
\begin{eqnarray*}
\tilde{\mathbb{P}}^*(\tau_0 \ge t) &\le & \tilde{\mathbb{P}}^*(N_t^+=0 \mbox{ or }N_t^-=0)\\
&\le & 2e^{-\mu_t}\\
&\le & 2e^{-C\sqrt{t}}.
\end{eqnarray*}
Thus,
\begin{eqnarray*}
\tilde{\mathbb{P}}^*(\tau_0 \ge M^2\log^2n) \le 2e^{-CM\log n}=\frac{2}{n^{CM}}.
\end{eqnarray*}
Consequently,
\begin{eqnarray*}
\tilde{\mathbb{P}}^*\left(\bigcup_{0\le i,j\le n}\{I_{i+1}^{(j)}-I_i^{(j)}\ge M^2\log^2n\}\right)
&\le&
\tilde{\mathbb{P}}^*\left(\exists \ u \le n \mbox{ with }\tau_u \ge M^2\log^2n\right)\\
&\le &\frac{2}{n^{CM-1}}
\end{eqnarray*}
by a simple union bound. \qed\\\\
Lemmas \ref{lem:stopping} and \ref{lem:poisson} yield the following corollary:
\begin{cor}\label{cor:holding}
There exists a constant $0<C<\infty$ that does not depend on $n$ such that for sufficiently large $M>0$, we can choose $M'$ (depending on $M$) satisfying
\begin{eqnarray}
\tilde{\mathbb{P}}^*(\sup_{j \le n}\gamma_j \ge M'\log^3n)\le Cn^{-M}
\end{eqnarray}
for all $n$.
\end{cor}
\begin{proof}
For sufficiently large $M$ and any $0 \le j,k \le n$, we have:
\begin{eqnarray*}
\tilde{\mathbb{P}}^*(\gamma_j \ge M^2k\log^2n)&=&\tilde{\mathbb{P}}^*(\gamma_j \ge M^2k\log^2n, I_k^{(j)}-I_1^{(j)}<M^2k\log^2n)\\
&+&\tilde{\mathbb{P}}^*(\gamma_j \ge M^2k\log^2n, I_k^{(j)}-I_1^{(j)}\ge M^2k\log^2n)\\
&\le &\tilde{\mathbb{P}}^*\left(\gamma_j >I_k^{(j)}-I_1^{(j)}\right)+ \tilde{\mathbb{P}}^*\left(\sum_{i=0}^{k-1}\left(I_{i+1}^{(j)}-I_i^{(j)}\right)\ge M^2k\log^2n\right)\\
& \le & \tilde{\mathbb{P}}^*(k_j \ge k)+\tilde{\mathbb{P}}^*\left(\bigcup_{0\le i,j\le n}\{I_{i+1}^{(j)}-I_i^{(j)}\ge M^2\log^2n\}\right)\\
& \le &\left(\frac{3}{4}\right)^{k-1} + \frac{2}{n^{CM-1}},
\end{eqnarray*}
where the last step follows from Lemmas \ref{lem:stopping} and \ref{lem:poisson}.\\\\
The assertion then follows by taking $k=\frac{CM}{\log(4/3)}\log n$ and the union bound.
\end{proof}
Now, to prove Lemma \ref{lem:momentbound}, notice that by (\ref{eqnarray:holdingexc}),
\begin{eqnarray*}
\tilde{\mathbb{E}}^*\left(\sum_{i=0}^{n-1}\mathbbm{1}(Y_i=0)\right)&\le & \tilde{\mathbb{E}}^*\left((a(n)+1)\left(\sup_{j \le n}\gamma_j\right)\right)\\
&=&\tilde{\mathbb{E}}^*\left((a(n)+1)\left(\sup_{j \le n}\gamma_j\right)\mathbbm{1}(\sup_{j \le n}\gamma_j < M'\log^3n)\right)\\
&+&\tilde{\mathbb{E}}^*\left((a(n)+1)\left(\sup_{j \le n}\gamma_j\right)\mathbbm{1}(\sup_{j \le n}\gamma_j \ge M'\log^3n)\right)\\
&\le &(M'\log^3n) \tilde{\mathbb{E}}^*(a(n)+1)+n^2\tilde{\mathbb{P}}^*(\sup_{j \le n}\gamma_j \ge M'\log^3n)\\
&\le &C_1\sqrt{n}\log^3n + Cn^{-(M-2)}\hspace{2.5cm} \mbox{ (by (\ref{eqnarray:excursion}))}\\
&\le & C_2\sqrt{n}\log^3n,
\end{eqnarray*}
choosing $M$ sufficiently large.\\\\
This, together with (\ref{eqnarray:findY}), gives Lemma \ref{lem:momentbound}.
\end{proof}

\subsection{Ergodicity from the point of view of a tagged particle}\label{subsection:ergodicity}

In this section we prove the ergodicity of the shift from the point of view of a tagged particle. Note that the measure $\tilde{P}$ on $\Omega$ along with the transition probabilities $\pi(\omega,\cdot)$ define a unique measure $\mathbb{\boldsymbol{P}}$ on $\Omega^{\mathbb{Z}}$ which is invariant with respect to the shift $\theta: \Omega^{\mathbb{Z}} \rightarrow \Omega^{\mathbb{Z}}$ given by $$\left(\theta(\boldsymbol{\omega})\right)_i=\boldsymbol{\omega}_{i+1},$$ where $\boldsymbol{\omega}=(\omega_1,\omega_2,\dots) \in \Omega^{\mathbb{Z}}$. We say $\mathbb{\tilde{P}}$ is \textit{ergodic} for the Markov process $\{T_{X_n}(\omega)\}_{n \ge 0}$ if the measure $\mathbb{\boldsymbol{P}}$ is ergodic for $\theta$. See \cite{hairer2006ergodic} for more details.

 A measure preserving system $(Y,S,\nu)$ is said to be totally ergodic if $(Y,S^k,\nu)$ is ergodic for all $k$. Any measure preserving system that is strong mixing is also totally ergodic (see \cite{petersenbook}).
\begin{lem} \label{totallyergodic}
The vertical shift  $(\Omega,T_{(0,1)},\mathbb{P})$ is strong mixing and thus totally ergodic. In particular $(\Omega,T_{(0,2)},\mathbb{P})$ is ergodic. 
\end{lem}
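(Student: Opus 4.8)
The plan is to establish strong mixing of $(\Omega,T_{(0,1)},\mathbb{P})$ directly, since strong mixing implies total ergodicity and in particular ergodicity of every power $T_{(0,k)}$, which gives the last sentence as the case $k=2$. Recall that $\Omega$ carries the law $\mathbb{P}$ induced by the edge crossings $(e^+(t,y),e^-(t,y))$, which are deterministic functions of the underlying independent family of two-sided simple random walks $\{S^{(k,i)}\}$ started from the Poisson$(1)$ configuration at time $0$. The key point is that the random variables $(e^+(t,y),e^-(t,y))$ depend on the walkers in a way that is \emph{spatially localizing}: to know the edge crossings at $(t,y)$ one only needs to know which walkers are at site $y$ at time $t$ and how each of them steps, and a walker started at $k$ can only reach $y$ by time $t$ if $|y-k|\le t$ (indeed $|y-k|$ and $t$ have the same parity). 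So the environment restricted to a space-time box is measurable with respect to the walkers started in a bounded range of initial sites, together with their increments up to a bounded time horizon.

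The main step is to prove mixing along the \emph{spatial} direction, i.e. $\mathbb{P}(A\cap T_{(0,m)}^{-1}B)\to\mathbb{P}(A)\mathbb{P}(B)$ as $m\to\infty$ for $A,B$ in the product $\sigma$-field. First I would reduce to $A,B$ cylinder events, i.e. events depending on the edge crossings in some fixed finite space-time box $\Lambda=[-L,L]\times[-L,L]$; these generate $\mathcal G$, and a standard approximation argument extends the mixing statement to all of $\mathcal G$. For such an event $A$, introduce the "bad set'' of walkers that are \emph{able} to influence $A$: a walker started at initial site $k$ can affect the crossings inside $\Lambda$ only if it visits a site of $[-L,L]$ at some time in $[-L,L]$, and by the ballistic bound on simple random walk paths this forces $|k|$ to be small \emph{with overwhelming probability}. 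Concretely, let $W_A$ be the collection of walkers that ever come within distance $L$ of the spatial window $[-L,L]$ during $[-L,L]$; then $A$ is measurable with respect to $W_A$ together with the increments of those walkers, \emph{and} $\mathbb{P}(W_A\text{ contains a walker from initial site }k)\le C e^{-c|k|^2/L}$ for $|k|$ large, by a union bound over the $O(1)$ Poisson particles at each site and a Gaussian/Azuma tail for each walk of length $\le 2L$. The same holds for $T_{(0,m)}^{-1}B$ with window shifted to $[m-L,m+L]$.

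Now the decoupling: for a truncation parameter $R$ with $L\ll R\ll m$, let $\mathcal E_m^{A}$ be the event that no walker with initial site outside $[-R,R]$ belongs to $W_A$, and similarly $\mathcal E_m^{B}$ with initial sites restricted to $[m-R,m+R]$. On $\mathcal E_m^A$ the event $A$ is determined by the (Poisson) particles and their increments with initial sites in $[-R,R]$, and on $\mathcal E_m^B$ the event $T_{(0,m)}^{-1}B$ is determined by particles and increments with initial sites in $[m-R,m+R]$; since $m>2R$ these two index sets are disjoint, and the underlying family (Poisson configuration plus independent walk increments) is a product measure over initial sites, so $A\cap\mathcal E_m^A$ and $(T_{(0,m)}^{-1}B)\cap\mathcal E_m^B$ are \emph{independent}. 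Hence
\begin{eqnarray*}
\left|\mathbb{P}(A\cap T_{(0,m)}^{-1}B)-\mathbb{P}(A)\mathbb{P}(B)\right|
\le 2\,\mathbb{P}\big((\mathcal E_m^A)^c\big)+2\,\mathbb{P}\big((\mathcal E_m^B)^c\big),
\end{eqnarray*}
using $|\mathbb{P}(A\cap E)-\mathbb{P}(A)|\le\mathbb{P}(E^c)$ twice and translation invariance of $\mathbb{P}$ (so $\mathbb{P}((\mathcal E_m^B)^c)=\mathbb{P}((\mathcal E_0^B)^c)$ does not depend on $m$). Choosing $R=R_m\to\infty$ slowly enough that $R_m<m/2$ for large $m$, the tail estimate above makes both error terms tend to $0$, giving mixing. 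Finally, strong mixing implies total ergodicity by \cite{petersenbook}, and then $(\Omega,T_{(0,2)},\mathbb{P})$ is ergodic as a special case, completing the proof.

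The step I expect to be the main obstacle is making the "localizing'' reduction fully rigorous: one has to verify carefully that the edge crossings inside a finite space-time box really are measurable with respect to only those walkers that enter an enlarged box (this is where the bounded-range / parity property of nearest-neighbour walk is used), and that the relevant "set of influential walkers'' has the claimed exponential tail in the initial location. Once that geometric decoupling is in place, the probabilistic estimate is a routine union bound over Poisson particles combined with a Gaussian deviation bound for a simple random walk run for a bounded time, and the mixing conclusion follows from the product structure of the driving randomness.
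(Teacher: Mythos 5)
Your proposal is correct in substance and establishes strong mixing, but it takes a more roundabout route than necessary. The paper's proof exploits that the decoupling is \emph{exact}, not merely approximate: the edge crossings $(e^\pm(t,y))$ inside a finite space--time rectangle $[-L,L]\times[-L,L]$ are a deterministic function of the Poisson counts and walk increments of walkers started from initial sites in $[-2L,2L]$, because a nearest-neighbour walker started at $k$ cannot be at $(t,y)$ unless $|y-k|\le |t|$. You note this ``bounded-range / parity property'' in your first paragraph, but then in the decoupling step you abandon it and replace the deterministic statement with a probabilistic one, asserting that a walker from site $k$ influences $\Lambda$ ``with overwhelming probability'' controlled by a Gaussian/Azuma tail $Ce^{-c|k|^2/L}$. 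That estimate is not wrong (it is vacuously an upper bound), but it obscures the fact that the true probability is identically zero once $|k|$ exceeds a fixed multiple of $L$. As a result you are led to introduce the truncation parameter $R$, the events $\mathcal E_m^A$, $\mathcal E_m^B$, and the error terms $\mathbb P((\mathcal E_m^A)^c)$, $\mathbb P((\mathcal E_m^B)^c)$ -- none of which are needed. The paper simply observes that for $M$ large enough (depending only on the rectangle defining the cylinder) the cylinder $R$ and its vertical shift $T_{(0,M)}(R)$ depend on disjoint sets of initial sites of the underlying product structure, hence are \emph{independent}; since such cylinders generate $\mathcal G$, strong mixing (and in fact weak Bernoulli, as the paper remarks) follows immediately. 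Your argument does buy one thing: it would still work if the walkers could take unbounded jumps with light tails, where the decoupling really is only approximate; but for nearest-neighbour walks the exact finite-speed argument is both shorter and sharper, and is what the paper intends.
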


\begin{proof}
For any cylinder set $R \subset \Omega$ which is defined by the values in a finite rectangle we have that for any sufficiently large $M$ the sets $T_{(0,M)}(R)$ and $R$ are independent. As these cylinder sets generate the $\sigma$-algebra, the shift $T_{(0,1)}$ is mixing and totally ergodic. Thus $T_{(0,2)}$ is ergodic. A (very slightly) more involved analysis shows that $T_{(0,1)}$ is weak Bernoulli and isomorphic to the shift on an infinite entropy i.i.d.\ measure. 
 \end{proof}
 
Now we want to define a new transformation built from $(\Omega,T_{(0,2)},\mathbb{P})$. 
This transformation is constructed from $(\Omega,T_{(0,2)},\mathbb{P})$ by inducing on the subset $\{v(0,0)>0\}$ and then building a ``Kakutani skyscraper" of height $v(0,0)$. The techniques of inducing and building skyscrapers are standard in ergodic theory and 
have been well studied in the theory of Kakutani equivalence \cite{orw} \cite{kakutani} \cite{petersenbook}. We start by describing the state space and the measure.
 
 Let $\Omegahat \subset \Omega \times \N$ consist of points of the form $(\omega,y)$ with $y\le v(0,0)$. 
 Let $\hatP$ be a measure on $\Omegahat$ defined as follows. (Technically $\hatP$ will be a measure on 
  $\Omega \times \N$ whose support is on $\Omegahat$.) Fix any $A \subset \Omega$ and $i \in \N$.
Then 
$$\hatP(A,i)=\mathbb{P}\left(A\cap \{i\le v(0,0)\}\right).$$
Let $\pi: \Omegahat \to \Omega$ be the projection map, $\pi(\omega,y)=\omega$. For any $A \subset \Omega$, let $\hat A \subset \Omegahat$ be defined by $\hat A=\pi^{-1}(A).$ 
Also, recall that $\tilde{\mathbb{P}}$ is an invariant measure for our process $\{T_{X_n}(\omega): n \in \N\}$. 
 These definitions give us the following lemma.

\begin{lem} \label{tunnel}
For any $A \subset \Omega$ $$\tilde{\mathbb{P}}(A)=\hatP(\hat A)$$ as both are the size biased version of the measure $\mathbb{P}$.
\end{lem}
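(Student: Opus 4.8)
The plan is to simply unwind the definitions of $\hatP$ and $\tilde{\mathbb{P}}$ and observe that both amount to size-biasing $\mathbb{P}$ by the integer-valued random variable $v(0,0)$. First I would note that, since $\hat A=\pi^{-1}(A)=\{(\omega,y):\omega\in A,\ 1\le y\le v(0,0)\}$, the set $\hat A$ decomposes ``floor by floor'' as the disjoint union $\hat A=\bigsqcup_{i=1}^{\infty}\big((A\cap\{i\le v(0,0)\})\times\{i\}\big)$. Because $\hatP$ is concentrated on $\Omegahat$ and $\hatP(B,i)=\mathbb{P}(B\cap\{i\le v(0,0)\})$ for every $B\subset\Omega$, countable additivity of $\hatP$ gives
$$\hatP(\hat A)=\sum_{i=1}^{\infty}\hatP\big(A,i\big)=\sum_{i=1}^{\infty}\mathbb{P}\big(A\cap\{i\le v(0,0)\}\big).$$

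Next I would pull the sum inside the expectation. Since $v(0,0)$ takes values in $\{0,1,2,\dots\}$, we have the pointwise identity $\sum_{i=1}^{\infty}\mathbbm{1}(i\le v(0,0))=v(0,0)$, and all summands being nonnegative, Tonelli's theorem (equivalently, monotone convergence) yields
$$\sum_{i=1}^{\infty}\mathbb{P}\big(A\cap\{i\le v(0,0)\}\big)=\mathbb{E}\Big(\mathbbm{1}_A\sum_{i=1}^{\infty}\mathbbm{1}(i\le v(0,0))\Big)=\mathbb{E}\big(v(0,0)\,\mathbbm{1}_A\big)=\tilde{\mathbb{P}}(A),$$
the last equality being the definition of $\tilde{\mathbb{P}}$. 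This is the whole argument.

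There is essentially no obstacle here: the statement is the bookkeeping fact that the Kakutani skyscraper of height $v(0,0)$ over $(\Omega,\mathbb{P})$ has, on the fibre over $\omega$, exactly $v(0,0)$ floors each of ``weight'' $\mathbb{P}(d\omega)$, so that pushing forward $\hatP$ under $\pi$ (equivalently, integrating a lift $f\circ\pi$ against $\hatP$) reproduces the size-biased measure $\tilde{\mathbb{P}}$. The only point requiring a moment's care is that $\hatP$ is defined a priori on $\Omega\times\N$ with support $\Omegahat$, so one should confirm that the floors in the decomposition of $\hat A$ neither overlap nor omit anything; this is immediate since $y\le v(0,0)$ is precisely the defining constraint of $\Omegahat$. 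It is also worth recording the corollary that $\hatP$-integration of $f\circ\pi$ equals $\tilde{\mathbb{E}}(f)$ for bounded measurable $f$ on $\Omega$, since that is the form in which the identity will be used when transferring ergodicity statements between the skyscraper and the environment seen from the particle.
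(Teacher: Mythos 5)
Your proof is correct and fills in exactly the bookkeeping the paper leaves implicit (the paper gives no argument beyond the phrase ``as both are the size biased version of the measure $\mathbb{P}$''). The floor-by-floor decomposition of $\hat A$, summation of $\hatP(A,i)=\mathbb{P}(A\cap\{i\le v(0,0)\})$ over $i$, and the identity $\sum_{i\ge 1}\mathbbm{1}(i\le v(0,0))=v(0,0)$ are precisely the steps one needs, and Tonelli is the right justification for the interchange.
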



Now we define the new transformation. For $\omega \in \Omega$, let 
$$n(\omega)=\inf\{m>0: 2\mid m,\ v(0,m)>0\} \ \ \ \ \text{ and } \ \ \ \ \tilde \omega =\T_{(0,n(\omega))}(\omega).$$
Next we define $\that:\Omegahat \to \Omegahat$ by
$$\that(\omega,y)=  \begin{cases} (\omega,y+1) &\mbox{if } y< v(0,0), \\
(\tilde \omega,1) & \mbox{if } y=v(0,0). \end{cases}$$

\begin{lem}
The transformation $(\hat \Omega,\that,\hatP)$ is measure preserving and ergodic.
\end{lem}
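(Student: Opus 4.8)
The plan is to recognize $(\hat\Omega,\hat T,\hat{\mathbb P})$ as a classical \emph{Kakutani skyscraper} (tower) built over the induced system of $(\Omega,T_{(0,2)},\mathbb P)$ on the base set $B=\{v(0,0)>0\}$, with roof function $v(0,0)$, and then invoke the standard facts that (a) an induced transformation of a measure-preserving system is measure preserving, and that it is ergodic whenever the original system is ergodic, and (b) a skyscraper over an ergodic base is itself ergodic. Concretely, let $R\colon \Omega\to\Omega$ be $R(\omega)=T_{(0,n(\omega))}(\omega)$ where $n(\omega)=\inf\{m>0:\ 2\mid m,\ v(0,m)>0\}$; I will first argue $n(\omega)<\infty$ for $\mathbb P$-a.e.\ $\omega$, hence $R$ is well defined a.e.

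First I would verify measure preservation. By Lemma~\ref{totallyergodic}, $(\Omega,T_{(0,2)},\mathbb P)$ is ergodic, and since $\mathbb P(v(0,0)>0)=1-e^{-1}>0$, Kac's lemma (via Poincar\'e recurrence) gives that $n(\omega)<\infty$ a.e.\ and that the return-time map $R$ preserves the conditional measure $\mathbb P(\cdot\mid B)$; equivalently $R$ preserves the restriction of $\mathbb P$ to $B$ up to the normalizing constant, and $\int_B n\,d\mathbb P=\mathbb P(\{2\mid \cdot\})\cdot(\text{something finite})$—in any case the key point is just that $R_*(\mathbb P|_B)=\mathbb P|_B$. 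Then one checks directly from the definition of $\hat{\mathbb P}$ that $\hat T$ preserves $\hat{\mathbb P}$: for $0\le y<v(0,0)$ the map shifts the fibre coordinate up by one, which moves mass $\mathbb P(A\cap\{i\le v(0,0)\})$ onto level $i+1$ but with the constraint $i+1\le v(0,0)$, i.e.\ onto $\{i+1\le v(0,0)\}=\{i\le v(0,0)\}\setminus\{v(0,0)=i\}$; the deficit at the top level $\{y=v(0,0)=i\}$ is exactly filled by the mass pushed up from the base via $(\tilde\omega,1)$, because $R$ preserves $\mathbb P|_B$. Summing the column contributions telescopes to $\hat{\mathbb P}(\hat T^{-1}(A,i))=\hat{\mathbb P}(A,i)$. (Lemma~\ref{tunnel} is the statement that the total mass is $\tilde{\mathbb P}(\Omega)=1$.)

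Next, ergodicity. Let $f$ be a bounded $\hat T$-invariant function on $\hat\Omega$. Restrict attention to the base $\hat B=\{(\omega,1):\omega\in B\}$: the first-return map of $\hat T$ to $\hat B$ is precisely $(\omega,1)\mapsto(R(\omega),1)$, so $f|_{\hat B}$ descends to an $R$-invariant function on $B$. Since $(\Omega,T_{(0,2)},\mathbb P)$ is ergodic and $B$ has positive measure, the induced map $(B,R,\mathbb P|_B)$ is ergodic (standard fact about induced transformations of ergodic systems), so $f$ is a.e.\ constant, say equal to $c$, on $\hat B$. Finally, invariance of $f$ under $\hat T$ propagates this constant up every column: for $\hat{\mathbb P}$-a.e.\ point $(\omega,y)$ with $y\le v(0,0)$ there is an $m\ge 0$ with $\hat T^m(\omega,y)\in\hat B$ (namely $m=v(0,0)-y+1$ lands at $(\tilde\omega,1)$, or simply iterate until the fibre coordinate cycles back through $1$), whence $f(\omega,y)=f(\hat T^m(\omega,y))=c$. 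Therefore $f$ is a.e.\ constant and $\hat{\mathbb P}$ is ergodic.

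The main obstacle—really the only non-bookkeeping point—is confirming that the induced/return structure is genuinely the one described: that $n(\omega)$ is a.e.\ finite (needs $\mathbb P(v(0,0)>0)>0$ together with ergodicity of $T_{(0,2)}$, i.e.\ Lemma~\ref{totallyergodic}, and note the restriction to even $m$ is why we pass to $T_{(0,2)}$ rather than $T_{(0,1)}$, since $v$ is only defined on all of $\mathbb Z$ but the tower is built over the even sublattice), and that the first-return map of $\hat T$ to $\hat B$ is exactly $R$ on the first coordinate. Once this identification is in place, measure preservation is the telescoping computation above and ergodicity follows from the textbook ergodicity of induced maps and of towers; I would cite \cite{petersenbook} or \cite{kakutani} for these two facts rather than reprove them.
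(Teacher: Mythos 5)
Your proposal is correct, and it takes a genuinely different route from the paper. The paper proves ergodicity by a direct invariant-set computation: given a $\that$-invariant set $A\subset\Omegahat$, it manufactures a $T_{(0,2)}$-invariant subset of $\Omega$ by taking $\pi(A)$ and gluing on the set $A'=\left[\bigcup_m T_{(0,2)}^m(\pi(A))\right]\cap\{v(0,0)=0\}$, then applies ergodicity of $T_{(0,2)}$ (Lemma \ref{totallyergodic}) and pushes the conclusion back to $\Omegahat$ via the identity $A=\pi^{-1}(\pi(A))$ up to null sets. You instead package $(\Omegahat,\that,\hatP)$ explicitly as a Kakutani tower over the induced transformation $R$ of $T_{(0,2)}$ on $B=\{v(0,0)>0\}$ with roof function $v(0,0)$, and then invoke two textbook facts: the induced map of an ergodic system on a positive-measure set is ergodic, and a skyscraper over an ergodic base is ergodic. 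Your identification of the structure is correct (in particular, $R(\omega)=T_{(0,n(\omega))}\omega$ is indeed the first-return map of $T_{(0,2)}$ to $B$, since $v(0,0)(T_{(0,2)}^k\omega)=v(0,2k)(\omega)$, and the first-return map of $\that$ to $\hat B=\{(\omega,1):\omega\in B\}$ is $(\omega,1)\mapsto(R\omega,1)$). Both arguments hinge on Lemma \ref{totallyergodic}; yours is more modular and shorter by citing classical results, while the paper's is more self-contained and avoids appealing to the general theory of induced maps. For measure preservation both you and the paper defer to the standard tower construction (the paper cites Chapter~2.3 of \cite{petersenbook}; your telescoping sketch amounts to the same thing). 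One stray remark in your writeup --- the formula $\int_B n\,d\mathbb P=\mathbb P(\{2\mid\cdot\})\cdot(\text{something finite})$ --- is garbled, but you correctly flag that the only fact actually used there is $R_*(\mathbb P|_B)=\mathbb P|_B$, which follows from Poincar\'e recurrence, so this does not affect the argument.
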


\begin{proof}
That the transformation is measure preserving is standard in ergodic theory (see Chapter 2.3 of \cite{petersenbook}) so we only check ergodicity. 
Suppose $A$ is an invariant set for $\that$ (that is, $\hatP(\that^{-1}A \ \Delta \ A)=0$). Set 
$$A'=\left[\bigcup_{m=-\infty}^\infty T^m_{(0,2)}(\pi(A))\right] \cap \{v(0,0)=0\}.$$
 If $\omega \in \pi(A) \cup A'$ then either $v(0,0)>0$ and
 there exists $y$ such that $(\omega,y) \in A$ or $v(0,0)=0$ and
 there exists $\omega'$ and $y'$ and $m'$ such that $(\omega',y') \in A$ and
 $T_{(0,2)}^{m'}(\omega')=\omega.$ It is easy to check that in either case we have 
 $T_{(0,2)}\omega \in \pi(A) \cup A'$ and 
 $\pi(A) \cup A'$ is invariant.

Also note that  as every point in $\pi^{-1}( \pi(A))$ is in the $\hat{T}$-orbit of a point in $A$ we have 
\begin{equation}\label{musselman}
A \subset \pi^{-1}( \pi(A)) \subset \bigcup_{m=-\infty}^{\infty}\that^m(A)=A,
\end{equation}
where the last containment is by the invariance of $A$. 

 Now assume that $\hatP(A)>0$. Then $\pr(\pi(A))>0$. So by the ergodicity of $T_{(0,2)}$ 
and the invariance of $\pi(A) \cup A'$ we have that $$\pr(\pi(A) \cup A')=1.$$ Then 
 $$\pr\{v(0,0)>0\}=\pr(\pi(A))$$ and by (\ref{musselman}) $$1=\hatP\{\pi^{-1}(\pi(A))\} = \hatP(A),$$
 so $\hatP(A)=1$ and $\that$ is ergodic.
\end{proof}



In the following two lemmas, we show that ergodicity of $(\hat \Omega,\that,\hatP)$ implies the ergodicity of $\tilde{\mathbb{P}}$ for the Markov process $\{T_{X_n}(\omega)\}_{n \ge 0}$.

 We call a set $A \subset \Omega$ \textit{invariant} for $T_{X_1}$ if $\mathbbm{1}_A(\omega)=\Pomega_{(0,0)}(T_{X_1}(\omega) \in A)$ for $\tilde{\mathbb{P}}\ a.e. \ \omega$.

\begin{lem} \label{surface}
If $A \subset \Omega$ is invariant for $T_{X_1}$, then $\hat A$ is invariant for $\that$.
\end{lem}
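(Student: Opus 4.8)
The plan is to unwind both sides of the claimed invariance through the projection map $\pi:\Omegahat\to\Omega$ and the definition of $\that$. Recall $\hat A=\pi^{-1}(A)$, so $\hat A$ is a ``cylinder'' over $A$: membership in $\hat A$ depends only on the $\Omega$-coordinate. The definition $\that(\omega,y)=(\omega,y+1)$ for $y<v(0,0)$ and $(\tilde\omega,1)$ for $y=v(0,0)$ shows that the $\Omega$-coordinate of $\that(\omega,y)$ is $\omega$ except when $y=v(0,0)$, where it jumps to $\tilde\omega=T_{(0,n(\omega))}\omega$. So $\that^{-1}\hat A$ disagrees with $\hat A$ only on the top floor $\{y=v(0,0)\}$, and there the question is whether $\tilde\omega\in A$, i.e. whether $T_{(0,n(\omega))}\omega\in A$.

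First I would reduce the problem to showing that $A$ being invariant for $T_{X_1}$ forces $A$ (up to $\tilde{\mathbb{P}}$-null sets) to be invariant under the induced vertical shift $\omega\mapsto T_{(0,n(\omega))}\omega$ on $\{v(0,0)>0\}$. The key point is that a single step of the random walk $X$ from $(0,0)$ in environment $\omega$ lands at $(1,1)$ or $(1,-1)$; iterating, the walk explores the environment and, because of the skyscraper construction built on $T_{(0,2)}$ after inducing on $\{v(0,0)>0\}$, the relevant return map on the environment seen from the particle is exactly the one generating $\that$. More concretely: invariance of $A$ for $T_{X_1}$ means $\mathbbm 1_A(\omega)=\Pomega_{(0,0)}(T_{X_1}\omega\in A)$ a.s., and since $T_{X_1}\omega$ is either $T_{(1,1)}\omega$ or $T_{(1,-1)}\omega$ with the environment-dependent weights $e^\pm(0,0)/v(0,0)$, the indicator $\mathbbm 1_A$ must be a.s. constant along these transitions; propagating this through the Markov chain $\{T_{X_n}\omega\}$, $\mathbbm 1_A$ is a.s. invariant along the whole trajectory of the environment process, and in particular is unchanged when the particle travels vertically between consecutive sites with $v>0$ that lie an even distance apart — which is precisely the map $\omega\mapsto\tilde\omega$ appearing in the top-floor case of $\that$.

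Having established that $\mathbbm 1_A(\tilde\omega)=\mathbbm 1_A(\omega)$ for $\tilde{\mathbb{P}}$-a.e. $\omega$ with $v(0,0)>0$, I would conclude as follows. For $(\omega,y)\in\Omegahat$ with $y<v(0,0)$, $\that(\omega,y)=(\omega,y+1)$ has the same $\Omega$-coordinate, so $\that(\omega,y)\in\hat A\iff(\omega,y)\in\hat A$. For $(\omega,y)$ with $y=v(0,0)$, $\that(\omega,y)=(\tilde\omega,1)\in\hat A\iff\tilde\omega\in A\iff\omega\in A\iff(\omega,y)\in\hat A$, using the invariance just proved. Hence $\that^{-1}\hat A=\hat A$ outside a set of $\omega$'s of $\tilde{\mathbb{P}}$-measure zero; by Lemma~\ref{tunnel} ($\tilde{\mathbb{P}}(A)=\hatP(\hat A)$, and the same for the exceptional set) this exceptional set has $\hatP$-measure zero, so $\hatP(\that^{-1}\hat A\,\Delta\,\hat A)=0$, i.e. $\hat A$ is invariant for $\that$.

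The main obstacle I anticipate is the reduction step: making rigorous the claim that $T_{X_1}$-invariance of $A$ propagates to invariance under the vertical return map $\omega\mapsto\tilde\omega$. One has to be careful that the random walk, with only nearest-neighbour $\pm1$ moves in space and a forced $+1$ in time, can actually realize (with positive quenched probability, on a full-measure set of environments) an excursion whose net spatial displacement is $n(\omega)$ and whose endpoint has $v>0$ — here the non-ellipticity issues flagged in the introduction (Lemma~\ref{lem:poisson}) and the fact that one works $\tilde{\mathbb{P}}$-a.s. rather than for every $\omega$ both matter. I would handle this by writing $\mathbbm 1_A$ as a bounded harmonic function for the environment chain and invoking that such functions are a.s. constant on irreducibility classes, together with the strong/total ergodicity of the vertical shift from Lemma~\ref{totallyergodic} to identify those classes.
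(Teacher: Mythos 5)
Your reduction is the right one: since $\hat A=\pi^{-1}(A)$ and $\that$ preserves the $\Omega$-coordinate except on the top floor $y=v(0,0)$, the whole statement boils down to showing $\mathbbm 1_A(\tilde\omega)=\mathbbm 1_A(\omega)$ for $\tilde{\mathbb{P}}$-a.e.\ $\omega$. This matches the structure of the paper's proof. But the crucial step — getting from $T_{X_1}$-invariance of $A$ to invariance under the purely spatial return map $\omega\mapsto\tilde\omega=T_{(0,n(\omega))}\omega$ — is where your argument has a genuine gap, and your proposed fixes do not close it.

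First, the phrase ``the indicator $\mathbbm 1_A$ is unchanged when the particle travels vertically'' misdescribes the environment chain: $T_{X_n}\omega=T_{(n,X_n)}\omega$ always advances the time coordinate by one each step, so the chain never realizes a pure vertical shift. Second, the reformulation in your last paragraph — find an excursion of net spatial displacement $n(\omega)$ — would only give you that $T_{(N,\,n(\omega))}\omega=T_{(N,0)}\tilde\omega$ lies in $A$ for some random $N>0$; this is $\tilde\omega$ pushed \emph{forward in time} and does not yield $\tilde\omega\in A$. Third, the suggestion to view $\mathbbm 1_A$ as a bounded harmonic function and identify irreducibility classes via total ergodicity of $T_{(0,2)}$ conflates two different dynamical systems: $T_{(0,2)}$ is not an iterate of the environment chain, and its ergodicity on $(\Omega,\mathbb P)$ does not by itself describe communication classes of the Markov process generated by $\Pi-I$.

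The paper's proof supplies exactly the missing idea: run \emph{two} walks in the same $\omega$, one from $(0,0)$ and one from $(0,n(\omega))$ (the latter being the walk from $(0,0)$ in $\tilde\omega$). For $\tilde{\mathbb P}$-a.e.\ $\omega$ there exist admissible spacetime paths (all steps having positive environment probability) from these two starting points that merge at a common point $(N,\gamma_1(N))=(N,\gamma_2(N))$, so that under a natural coupling $\lambda$ we have $\lambda\bigl(T_{X_N}(\omega)=T_{\hat X_N}(\tilde\omega)\bigr)>0$. Since $\omega\in A$ and $A$ is invariant, $T_{X_N}\omega\in A$ $\mathbb P^\omega$-a.s., hence $\mathbb P^{\tilde\omega}_{(0,0)}\bigl(T_{X_N}(\tilde\omega)\in A\bigr)>0$; but iterated invariance forces this probability to equal $\mathbbm 1_A(\tilde\omega)\in\{0,1\}$, so $\tilde\omega\in A$. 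This coupling of two vertically-offset walkers — not a single excursion, nor an appeal to the vertical shift's ergodicity — is the content your argument would need to supply.
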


\begin{proof}
Let $A$ be an invariant set for $T_{X_1}$ with $\tilde{\mathbb{P}}(A)>0$. Then for every $n$, $\Pomega_{(0,0)}(T_{X_n} (\omega) \in A)=1$ for $\tilde{\mathbb{P}}$ a.e.\ $\omega \in A$ and $\Pomega_{(0,0)}(T_{X_n}(\omega) \in A)=0$ for $\tilde{\mathbb{P}}$ a.e.\ $\omega \not\in A$. Note that for $\tilde{\mathbb{P}}$ a.e. $\omega \in A$ with $v(0,0)>0$, there exist paths $\{\gamma_1(k) \in \mathbb{Z}: k \le N\}$ and $\{\gamma_2(k)\in \mathbb{Z}: k \le N\}$, with $\gamma_1,\gamma_2,N$ depending on $\omega$, such that $\gamma_1(0)=0$, $\gamma_2(0)=n(\omega)$, $\gamma_1(N)=\gamma_2(N)$ and $$\displaystyle{e^{\sgn(\gamma_i(j+1)-\gamma_i(j))}(j,\gamma_i(j))>0}$$ for $i=1,2$, $j \le N-1$. Let $X$ and $\hat{X}$ denote two random walks on the same environment $\omega$ starting from $(0,0)$ and $(0,n(\omega))$ respectively. This gives, for a given such $\omega$, a natural coupling $\lambda$ between the laws of $(T_{X_n}(\omega))$ and $(T_{X_n}(\tilde{\omega}))$ such that $$\lambda \left(T_{X_N}(\omega)=T_{\hat{X}_N}(\tilde{\omega})\right)>0.$$ Thus, a.s. $\omega \in A$, $\mathbb{P}^{\tilde{\omega}}_{(0,0)}(T_{X_N}(\tilde{\omega}) \in A)>0$ implying $\tilde{\omega}\in A$. Now by the definition of $\hat A$ and $\that$ we have that $\hat A$ is a.s.\ invariant under $\that.$
\end{proof}

\begin{lem} \label{elevated}
$\tilde{\mathbb{P}}$ is ergodic for the Markov process $\left(T_{X_n}(\omega)\right)_{n\ge0}$.
\end{lem}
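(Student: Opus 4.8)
The plan is to deduce ergodicity of $\tilde{\mathbb{P}}$ for the Markov process $\left(T_{X_n}(\omega)\right)_{n\ge0}$ from the ergodicity of the skyscraper system $(\hat\Omega,\that,\hatP)$, using Lemma \ref{surface} as the bridge. Recall from the discussion preceding Lemma \ref{totallyergodic} that $\tilde{\mathbb{P}}$ being ergodic for $\{T_{X_n}(\omega)\}$ means (by definition) that the shift $\theta$ on $\Omega^{\mathbb{Z}}$ is ergodic for the path measure $\mathbb{\boldsymbol{P}}$. A standard fact in the theory of Markov chains is that $\theta$-ergodicity of $\mathbb{\boldsymbol{P}}$ is equivalent to triviality of the invariant $\sigma$-field of the transition operator; concretely, it suffices to show that every set $A\subset\Omega$ that is invariant for $T_{X_1}$ (in the sense defined just before Lemma \ref{surface}, i.e. $\mathbbm{1}_A(\omega)=\Pomega_{(0,0)}(T_{X_1}(\omega)\in A)$ for $\tilde{\mathbb{P}}$ a.e.\ $\omega$) has $\tilde{\mathbb{P}}(A)\in\{0,1\}$.

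So first I would fix such an invariant set $A$ and assume $\tilde{\mathbb{P}}(A)>0$; the goal is to conclude $\tilde{\mathbb{P}}(A)=1$. By Lemma \ref{surface}, $\hat A=\pi^{-1}(A)$ is invariant for $\that$ (up to $\hatP$-null sets). By Lemma \ref{tunnel}, $\hatP(\hat A)=\tilde{\mathbb{P}}(A)>0$. Since $(\hat\Omega,\that,\hatP)$ is ergodic (the previous lemma), this forces $\hatP(\hat A)=1$. Applying Lemma \ref{tunnel} once more, $\tilde{\mathbb{P}}(A)=\hatP(\hat A)=1$. Hence the invariant $\sigma$-field for $T_{X_1}$ is $\tilde{\mathbb{P}}$-trivial, which is exactly the ergodicity of $\tilde{\mathbb{P}}$ for $\{T_{X_n}(\omega)\}_{n\ge 0}$.

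The one point needing a little care — and the main obstacle, such as it is — is the reduction of ``$\mathbb{\boldsymbol{P}}$ is $\theta$-ergodic'' to ``every $T_{X_1}$-invariant set has trivial $\tilde{\mathbb{P}}$-measure.'' This is where one invokes the standard correspondence (see e.g.\ \cite{hairer2006ergodic}) between shift-invariant events for the path space of a stationary Markov chain and functions fixed by the transition operator: a bounded measurable $f$ on $\Omega$ with $\Pi f=f$ (equivalently, $f(\omega)=\Eomega_{(0,0)}f(T_{X_1}(\omega))$) must be $\tilde{\mathbb{P}}$-a.s.\ constant, and indicators of $T_{X_1}$-invariant sets are exactly such functions (they are harmonic for the chain, hence fixed, since the chain started from such a set stays there for all time with probability one, which is the content of the first line of the proof of Lemma \ref{surface}). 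Given this dictionary, the argument above closes the loop. I would write this step explicitly but briefly, citing \cite{hairer2006ergodic} and \cite{petersenbook}, and then string together Lemmas \ref{surface}, \ref{tunnel}, and the ergodicity of $\that$ as above to finish.
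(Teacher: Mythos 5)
Your proof is correct and follows essentially the same route as the paper: apply Lemma \ref{surface} to transfer invariance of $A$ to invariance of $\hat A$ for $\that$, use ergodicity of $(\hat\Omega,\that,\hatP)$ to get $\hatP(\hat A)\in\{0,1\}$, pull this back to $\tilde{\mathbb{P}}(A)\in\{0,1\}$ via Lemma \ref{tunnel}, and invoke the standard Markov-chain dictionary (Corollary~5.11 of \cite{hairer2006ergodic}) to conclude that $\mathbb{\boldsymbol{P}}$ is $\theta$-ergodic. The only difference is that you spell out the last reduction step in more detail than the paper does.
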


\begin{proof}
Let $A \subset \Omega$ be an invariant set for $T_{X_1}$.
By Lemma \ref{surface}, $\hat A$ is invariant for $\that$. As $(\Omegahat,\that,\hatP)$ is ergodic we have $\hatP(\hat A)=0 \text{ or } 1$. By Lemma \ref{tunnel}, $\tilde{\mathbb{P}}( A)$ 
is also equal to  0 or 1. The ergodicity of $\tilde{\mathbb{P}}$ for the Markov process $\left(T_{X_n}(\omega)\right)_{n\ge0}$ now follows from Corollary 5.11, Pg. 42 of \cite{hairer2006ergodic}.
\end{proof}


\textbf{Conclusion and Remarks: }In higher dimensions, although estimating the number of intersections of the two independent random walks in the same environment is a more subtle issue, the developments in \cite{peres} indicate that results similar to Lemma \ref{lem:poisson} and Lemma \ref{surface} hold. Applying the rest of the techniques of this paper to the case of higher dimensions seems straightforward.

In addition to the technique of \cite{timo} that we adapted to our problem in this paper, there are methods to prove quenched CLT (among others) in \cite{sznit} which have been developed further by \cite{bergerquenched} (although these assume i.i.d. environments which are uniformly elliptic, or ballistic in a certain sense). These methods have the advantage of avoiding issues of ergodicity, but it remains to be verified whether they extend sufficiently to be useful in our problem. 

We hope to address these issues in a later article.

\textbf{Acknowledgements: }We wish to thank Krzysztof Burdzy for suggesting a similar problem which led to this article, and providing guidance throughout this project. S.B.\ gratefully acknowledges support from NSF grant number DMS-1206276.
C.H.\ gratefully acknowledges support from the NSF and the NSA (grants number DMS-1308645 and H98230-13-1-0827).

We also wish to thank two anonymous referees for their very useful suggestions and careful reading of the manuscript. In particular, we thank one of them for bringing to our notice the issues discussed in the conclusion.

\bibliographystyle{plain}
\bibliography{RMSbib}

\end{document}